\setlist[enumerate]{itemsep=0.5ex}
\theoremstyle{plain}
\newtheorem{theorem}{Theorem}[section]
\newtheorem{proposition}[theorem]{Proposition}
\newtheorem{lemma}[theorem]{Lemma}
\theoremstyle{definition} 
\newtheorem{definition}[theorem]{Definition}
\newtheorem*{claim*}{Claim}
\newtheorem{setup}[theorem]{Geometric Setup}
\theoremstyle{remark} 
\newtheorem{remark}[theorem]{Remark}
\numberwithin{equation}{section}
\newcommand{\Sc}{\mathrm{Sc}}
\newcommand{\Endo}{\mathrm{End}}
\newcommand{\Bigwedge}{\mathord{\adjustbox{raise=.4ex, totalheight=.7\baselineskip}{$\bigwedge$}}}
\newcommand{\tcon}{\widehat \nabla}
\newcommand{\tdirac}{\widehat D}
\newcommand{\grad}{\overbar d}
\DeclareMathOperator{\hess}{\mathrm{Hess}}
\newcommand{\ind}{\textup{Ind}}
\newcommand{\id}{\mathrm{id}}
\newcommand{\R}{\mathbb{R}}
\newcommand{\tr}{\mathrm{tr}}
\newcommand{\Z}{\mathbb{Z}}
\newcommand{\mcon}{\prescript{M}{}{\nabla}}
\newcommand{\ncon}{\prescript{N}{}{\nabla}}
\newcommand{\interior}[1]{%
	{\kern0pt#1}^{\mathrm{\,o}}%
}
\let\save@mathaccent\mathaccent
\newcommand*\if@single[3]{%
	\setbox0\hbox{${\mathaccent"0362{#1}}^H$}%
	\setbox2\hbox{${\mathaccent"0362{\kern0pt#1}}^H$}%
	\ifdim\ht0=\ht2 #3\else #2\fi
}
\newcommand*\rel@kern[1]{\kern#1\dimexpr\macc@kerna}
\newcommand*\overbar[1]{\@ifnextchar^{{\wide@bar{#1}{0}}}{\wide@bar{#1}{1}}}
\newcommand*\wide@bar[2]{\if@single{#1}{\wide@bar@{#1}{#2}{1}}{\wide@bar@{#1}{#2}{2}}}
\newcommand*\wide@bar@[3]{%
	\begingroup
	\def\mathaccent##1##2{%
		\let\mathaccent\save@mathaccent
		\if#32 \let\macc@nucleus\first@char \fi
		\setbox\z@\hbox{$\macc@style{\macc@nucleus}_{}$}%
		\setbox\tw@\hbox{$\macc@style{\macc@nucleus}{}_{}$}%
		\dimen@\wd\tw@
		\advance\dimen@-\wd\z@
		\divide\dimen@ 3
		\@tempdima\wd\tw@
		\advance\@tempdima-\scriptspace
		\divide\@tempdima 10
		\advance\dimen@-\@tempdima
		\ifdim\dimen@>\z@ \dimen@0pt\fi
		\rel@kern{0.6}\kern-\dimen@
		\if#31
		\overline{\rel@kern{-0.6}\kern\dimen@\macc@nucleus\rel@kern{0.4}\kern\dimen@}%
		\advance\dimen@0.4\dimexpr\macc@kerna
		\let\final@kern#2%
		\ifdim\dimen@<\z@ \let\final@kern1\fi
		\if\final@kern1 \kern-\dimen@\fi
		\else
		\overline{\rel@kern{-0.6}\kern\dimen@#1}%
		\fi
	}%
	\macc@depth\@ne
	\let\math@bgroup\@empty \let\math@egroup\macc@set@skewchar
	\mathsurround\z@ \frozen@everymath{\mathgroup\macc@group\relax}%
	\macc@set@skewchar\relax
	\let\mathaccentV\macc@nested@a
	\if#31
	\macc@nested@a\relax111{#1}%
	\else
	\def\gobble@till@marker##1\endmarker{}%
	\futurelet\first@char\gobble@till@marker#1\endmarker
	\ifcat\noexpand\first@char A\else
	\def\first@char{}%
	\fi
	\macc@nested@a\relax111{\first@char}%
	\fi
	\endgroup
}
\begin{document}

\title{Dihedral rigidity for submanifolds of warped product manifolds}

\author{Jinmin Wang}
\address[Jinmin Wang]{Department of Mathematics, Texas A\&M University}
\email{jinmin@tamu.edu}
\thanks{The first author is partially supported by NSF 1952693.}
\author{Zhizhang Xie}
\address[Zhizhang Xie]{ Department of Mathematics, Texas A\&M University }
\email{xie@math.tamu.edu}
\thanks{The second author is partially supported by NSF 1952693.}

\begin{abstract}
	In this paper, we prove a dihedral extremality and rigidity theorem for a large class of codimension zero submanifolds with polyhedral boundary  in   warped product manifolds. We remark that the spaces considered in this paper are not necessarily warped product manifolds themselves. In particular, the results of this paper are applicable to  submanifolds (of warped product manifolds)  with faces that are neither orthogonal nor parallel to the radial direction of the warped product metric. Generally speaking, the dihedral rigidity results require  the leaf of the underlying warped space to have positive Ricci curvature and the warping function to be strictly log-concave. Nevertheless, we prove a dihedral rigidity theorem  for  a large class of hyperbolic polyhedra, where the leaf of the underlying warped product space is flat and the warping function is not strictly log-concave. 
\end{abstract}
\maketitle
	\section{Introduction}
The main purpose of this paper is to prove Gromov's  extremality and dihedral rigidity conjectures
  for a large class of submanifolds with polyhedral boundary in   warped product spaces. The main results of the paper are generalizations of results in   \cite[Section 5.6]{Gromov4lectures2019}, \cite{MR4158684} and \cite{Wang:2021tq} to a wider class of manifolds   with polyhedral boundary. See Definition \ref{def:polytopeboundary} below for the precise definition of manifolds with polyhedral boundary.

The Gromov dihedral extremality and rigidity conjectures concern comparisons of scalar curvature, mean curvature and dihedral angles for Riemannian metrics on manifolds with polyhedral boundary. They can be viewed as  scalar curvature analogues of the Alexandrov’s triangle comparisons for spaces whose sectional curvature is bounded below \cite{MR0049584,MR854238}. The dihedral extremality and  rigidity conjectures are two important conjectures  among a list of conjectures and open questions on  scalar curvature formulated by Gromov  \cite{GromovDiracandPlateau, Gromovinequalities2018, 	Gromov4lectures2019}. They have profound implications in  geometry and mathematical physics. For example, it implies the positive mass theorem, a foundational result in general relativity and differential geometry
\cite{MR535700,MR612249, MR626707} (cf. \cite[Section 5]{Li:2019tw} and \cite[Discussion after Theorem 1.7] {Wang:2021tq}).  

In a previous paper joint with Yu \cite{Wang:2021tq}, the authors settled   Gromov's dihedral extremality conjecture for convex Euclidean polyhedra in all dimensions \cite[Theorem 1.8]{Wang:2021tq}. Later on,  the authors answered positively Gromov's flat corner domination conjecture  in all dimensions  \cite{Wang2022:fl}. As a consequence,  the authors answered positively the Stoker conjecture on convex Euclidean polyhedra in all dimensions \cite{Wang2022:fl}. We refer the reader to \cite{Wang2022:fl, Wang:2021tq} and the references therein for more details.

While the results and methods in \cite{Wang2022:fl, Wang:2021tq} concern manifolds with nonnegative curvature operator, we prove in this paper Gromov's  extremality and dihedral rigidity conjectures
for a large class of manifolds with polyhedral boundary that are not necessarily non-negatively curved (e.g. hyperbolic manifolds with polyhedral boundary).  Let us first fix some notation. Let $(X,h)$ be a smooth $(n-1)$ dimensional Riemannian manifold with polyhedral boundary and $\varphi$ a positive function. A warped product metric on $X\times I$ is a Riemannian metric of the following form
$$g=dr^2+\varphi(r)^2h,$$
where $r$ is the parameter of the interval $I$. We denote by $\partial_r$ the tangent vector along the $r$ direction.  We should mention that some special cases of the results of the present paper   were previously proved by Gromov (for the case where  the leaf $X$ is a regular spherical simplex \cite[Section 5.6]{Gromov4lectures2019})  and by  Cecchini-Zeidler  (for the case where the leaf $X$ is an even dimensional closed manifold \cite[Section 10]{Cecchini:2021vs}). However, while the previous results of  Gromov \cite[Section 5.6]{Gromov4lectures2019} and  Cecchini-Zeidler \cite[Section 10]{Cecchini:2021vs} deal with actual warped product spaces, we emphasize that the manifolds we study in the present paper are \emph{not} necessarily warped product manifolds themselves, but  \emph{submanifolds} of warped product manifolds. To be precise, we work for the following regions.
\begin{definition}\label{def:radiallyConvex}
	Let $M$ be a codimension zero submanifold with polyhedral boundary in $(X\times I,dr^2+\varphi(r)^2 h)$. We say that $M$ is \emph{radially convex} if on each face of $M$, its second fundamental form $A$ satisfies
	$$A-\frac{\varphi'}{\varphi}\langle\nu,\partial r\rangle\geq 0,$$
	where $\nu$ is the unit outer normal vector of the face, and $\partial r$ is the gradient of the function $r$.
\end{definition} 

See Figure \ref{fig:example} for some examples of radially convex domains in a Euclidean annulus $S^1\times I$. 
\begin{figure}[h]
	\begin{tikzpicture}[scale=1]
		\draw[thick] (4,0) arc(0:90:4) (0,4) -- (1.414,1.414) -- (4,0);
		\filldraw[opacity=0.1] (4,0) arc(0:90:4) (0,4) -- (1.414,1.414) -- (4,0);
		\draw[dashed] (1.6,0) -- (4,0);
		\draw[dashed] (0,1.6) -- (0,4);
		\draw[dashed] (1.6,0) arc(0:90:1.6);
	\end{tikzpicture}
	\begin{tikzpicture}
		\draw[thick] (4,0) arc(0:90:4) (0,4) -- ({2*cos(60)},{2*sin(60)}) arc(60:30:2) ({2*sin(60)},{2*cos(60)}) -- (4,0);
		\filldraw[opacity=0.1] (4,0) arc(0:90:4)  -- ({2*cos(60)},{2*sin(60)}) arc(60:30:2) ({2*sin(60)},{2*cos(60)}) -- (4,0);
		\draw[dashed](4,0) -- (2,0) arc(0:90:2) -- (0,4);
	\end{tikzpicture}
	\begin{tikzpicture}
		\draw[thick] (2,0) arc(0:270:2) (0,-2) -- (0,-1) arc(270:0:1) (1,0) -- (2,0) ;
		\filldraw[opacity=0.1] (2,0) arc(0:270:2)  -- (0,-1) arc(270:0:1) (1,0) -- (2,0) ;
	\end{tikzpicture}
	\caption{Examples of radially convex domains in $\R^2$ with $X=S^1$ and $\varphi(r)=r^2$}
	\label{fig:example}
\end{figure}
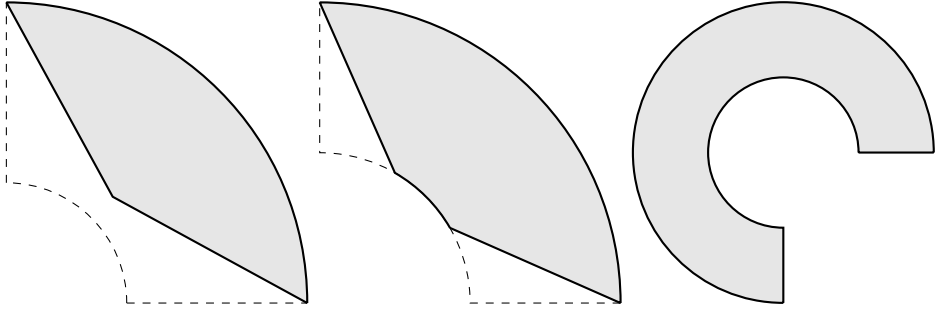

Note that some of the faces of $M$ could have negative curvature. 

\begin{definition}
A  map $f\colon N \to M$ between two oriented  manifolds $N$ and $M$ is called a spin map if the second Stiefel--Whitney classes of $TM$ and $TN$ are related by 
		\[  w_2(TN) = f^\ast(w_2(TM)).\]
		Equivalently, $f\colon N \to M$ is a spin map   if $TN\oplus f^\ast TM$ admits a spin structure. 
\end{definition}

Given a Riemannian metric $g$ on an oriented manifold $M$ with polyhedral boundary (cf. Definition \ref{def:polytopeboundary}), we shall denote the scalar curvature of $g$ by $\Sc(g)$, the mean curvature\footnote{Our sign convention for the mean curvature is that the mean curvature of the standard round sphere viewed as the boundary of a Euclidean ball is positive. } of each face $F_i$ of $M$ by $H_g(F_i)$, and the dihedral angle function of two adjacent faces $F_i$ and $F_j$ by $\theta_{ij}(g)$.  The first main result of this paper is the following.

\begin{theorem}\label{thm:rigidityOfWarpedProductintro}
Suppose $M$ is an $n$-dimensional radially convex domain  in a warped product manifold $(X\times I, g)$, where  $g=dr^2+\varphi^2h$, such that 
	\begin{enumerate}[label=$(\alph*)$]
		
		\item the curvature operator for $(X, h)$ is non-negative,
		\item $\varphi$ is increasing (i.e. $\varphi'\geq 0$)  and log-concave, that is,  
		\[ (\log\varphi)'' = \left(\frac{\varphi'}{\varphi}\right)'  \leq 0, \]
			  \item the Euler characteristic of $M$ is  non-zero.
	\end{enumerate}
If $(N, \overbar g) $ is a Riemannian manifold with polyhedral boundary and $f\colon N\to M$  is a  spin  polytope\footnote{See Definition \ref{def:polytopeMap} below for the precise definition of polytope maps.} map such that 
	\begin{enumerate}[label=$(\arabic*)$]
		
		\item 
		$\Sc(\overbar{g})_x \geq \Sc(g)_{f(x)}$ for all $x\in N$, 
		\item $H_{\overbar{g}}(\overbar{F}_i)_y \geq  H_{g}(F_i)_{f(y)}$ for all $y$ in each codimension one face $\overbar{F}_i$ of $N$, 
		\item $\theta_{ij}(\overbar{g})_z\leq \theta_{ij}(g)_{f(z)}$ for  all $z \in \overbar F_i\cap \overbar{F}_j$, where $\overbar F_i$ and $ \overbar F_j$ are adjacent codimension one faces of $N$, 
		\item $f\colon N\to M$ is distance non-increasing,\footnote{Here $f\colon N\to M$ is said to be distance non-increasing at $x\in N$ if $\|df\|_x\leq 1$, where $df\colon  TN\to TM$ is the tangent map. We say $f$ is distance non-increasing if $f$ is distance non-increasing   at all $x\in N$. } 
			\item the degree of $f$ is nonzero,
	\end{enumerate} 
	then we have 
	\begin{enumerate}[label=$(\roman*)$]
		
		\item 
		$\Sc(\overbar{g})_x = \Sc(g)_{f(x)}$ for all $x\in N$, 
		\item $H_{\overbar{g}}(\overbar{F}_i)_y =  H_{g}(F_i)_{f(y)}$ for all $y$ in each codimension one face $\overbar{F}_i$ of $N$, 
		\item $\theta_{ij}(\overbar{g})_z =  \theta_{ij}(g)_{f(z)}$ for  all $z \in \overbar F_i\cap \overbar{F}_j$,  
		\item and if more than two codimension one faces of $N$ meet at a point, then the angle\footnote{The angle between a pair of non-adjacent codimension one faces of $N$ is \emph{not} a dihedral angle. In particular, the conclusion of part $(iv)$ of Theorem \ref{thm:rigidityOfWarpedProductintro} states that not only the corresponding dihedral angles of $N$ and $M$ are equal, but also the other  corresponding angles of $N$ and $M$ are equal.}  between any pair of inner normal vectors of these faces of $N$ is equal to the angle between the inner normal vectors of the corresponding faces of  $M$.
	\end{enumerate}   Moreover, the following hold. 
	\begin{enumerate}[label=$(\mathrm{\Roman*})$]
		\item  If $\varphi$ is strictly log-concave,\footnote{$\varphi$ is strictly log-concave if $(\log\varphi)''<0.$} then the metric $\overbar g$ on $N$ is also a warped product metric of the form $d\overbar r^2+\overbar \varphi^2\overbar h$ with $f_*(\partial_{\overbar r})=\partial_r$ and $\overbar\varphi=f^*\varphi$. If additionally the metric $h$ on $X$ is Ricci positive, then $f$ is a local isometry.
		\item For  $x$  in a codimension one face $\overbar F$ of $N$, if
		$$A-\frac{\varphi'}{\varphi}\langle\nu,\nabla r\rangle>0,$$
		at $x$, then $\partial f$ is an isometry\footnote{Here $\partial f \coloneqq  f|_{\partial N}$ is the restriction of $f\colon N\to M$ on  $\partial N$. }  at $x$.
	\end{enumerate}
\end{theorem}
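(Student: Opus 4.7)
The plan is to attack this via a spinorial Dirac operator argument of the Gromov--Lawson / Goette--Semmelmann type, adapted to polyhedral boundaries as in \cite{Wang:2021tq, Wang2022:fl}. Since $f\colon N\to M$ is a spin map, the bundle $TN\oplus f^{\ast}TM$ carries a spin structure, so there is a well-defined twisted spinor bundle $\cS\coloneqq \cS(TN\oplus f^{\ast}TM)$ on $N$. Using the warped product structure $g=dr^{2}+\varphi^{2}h$ on $M$, the pulled-back bundle $f^{\ast}TM$ has a distinguished section $\nu\coloneqq f^{\ast}\partial_{r}$, and I would build a modified connection $\tcon$ on $\cS$ by adding to the tensor-product Levi-Civita connection a Clifford term involving $\varphi'/\varphi$ and $\nu$, so that spinors pulled back from the warped product model are $\tcon$-parallel.

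The next step is to set up boundary conditions. On each codimension-one face $\overbar F$ of type (1) (oblique to $\partial_{r}$) I would impose the standard chirality boundary condition twisted by the inner unit normal; on each leaf-type face (type (2)) the condition is imposed using $\nu$ itself. The dihedral-angle hypothesis guarantees that adjacent boundary conditions are compatible along edges in the sense of \cite{Wang:2021tq}, so the associated twisted Dirac operator $\tdirac$ is self-adjoint and Fredholm. A Schr\"odinger--Lichnerowicz computation then yields
\[
\int_{N}|\tdirac\psi|^{2}=\int_{N}\bigl(|\tcon\psi|^{2}+\mathcal{R}\,|\psi|^{2}\bigr)+\int_{\partial N}\mathcal{B}(\psi),
\]
where $\mathcal{R}$ decomposes into $\tfrac14(\Sc(\overbar g)-\Sc(g)\circ f)$ plus nonnegative curvature-operator terms from $(X,h)$ and a term proportional to $-(\log\varphi)''\circ f$; the boundary integrand $\mathcal{B}$ splits into face contributions controlled by $H_{\overbar g}(\overbar F)-H_{g}(F)\circ f$ and edge contributions controlled by $\theta_{ij}(g)\circ f-\theta_{ij}(\overbar g)$. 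The distance non-increasing hypothesis is used to convert curvature-operator nonnegativity on $X$ into a nonnegative contribution on $N$ after pullback, and the log-concavity and $\varphi'\ge0$ together ensure the warping term is nonpositive where it needs to be.

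To produce a nontrivial spinor I would use the Euler characteristic hypothesis on $M$ together with the spin map condition and nonzero degree of $f$ to show that the twisted index of $\tdirac$ is nonzero: a generating section of the pullback of the "model" bundle on $M$ (whose existence is forced by $\chi(M)\ne 0$) pulls back to a section of $f^{\ast}TM$-twisted spinors that survives to a nonzero class, and the degree condition guarantees the associated index does not vanish. Combined with the identity above this forces all the comparison inequalities to be saturated pointwise, yielding conclusions (i)--(iii), and the identity between non-adjacent normal angles in (iv) comes from the fact that any harmonic spinor satisfying the mixed boundary conditions must have its Clifford-multiplied normals aligned with those of the model. For the refined statements: strict log-concavity in (I) forces $\tcon\psi=0$ with the warping term strictly positive unless $d\overbar r\coloneqq f^{\ast}dr$ is a unit parallel one-form on $N$, which immediately gives the warped product decomposition, and Ricci positivity of $h$ rigidifies the leaf to a local isometry; statements (II) and (III) are the boundary versions, where nondegeneracy of the relevant boundary term (either the off-axis tilt of the inner normal combined with $\varphi'>0$, or positive second fundamental form) forces $df$ to preserve the metric on $\overbar F$.

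The main obstacle, I expect, is the boundary analysis: handling the two different types of faces simultaneously while keeping the boundary term in the Lichnerowicz identity pointwise nonnegative requires a careful choice of chirality/normal boundary conditions that is compatible with $\tcon$ (whose extra Clifford-$\nu$ term does not commute with arbitrary normals), and verifying that the edge compatibility along type-(1)/type-(2) intersections really produces the claimed angle comparison. Propagating equality in the boundary integrals to the global warped-product rigidity in (I) is the second delicate point, because one must show that a parallel spinor for $\tcon$ with the given boundary behavior really does integrate to a warped product foliation of $N$ whose warping factor coincides with $f^{\ast}\varphi$.
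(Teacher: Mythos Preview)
Your overall architecture matches the paper's: twisted spinor bundle $S_N\otimes f^{\ast}S_M$, a modified connection making $\partial_r$ parallel, chirality boundary conditions, a Lichnerowicz-type identity, and an index argument via $\chi(M)\cdot\deg(f)\neq 0$. But two ingredients are genuinely missing, and without them the argument does not close.

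\textbf{The zeroth-order potential.} The operator the paper actually analyzes is not $\tdirac$ but
\[
\tdirac_{\Psi}=\tdirac+\Psi,\qquad \Psi=\frac{n}{2}\,f^{\ast}\Bigl(\frac{\varphi'}{\varphi}\Bigr)\,(\overbar\epsilon\otimes\epsilon)\,(1\otimes c(\partial_r)).
\]
Your connection modification $\tcon$ only makes $c(\partial_r)$ parallel; its curvature term $\mathcal R$ then compares $\Sc(\overbar g)$ against the \emph{leafwise} quantity $\Sc(h)/\varphi^2$, not against $\Sc(g)$. The missing pieces of the warped-product scalar curvature formula, namely $(\frac{n\varphi'}{2\varphi})^2$ and $(\frac{n\varphi'}{2\varphi})'$, arise from $\Psi^2$ and from the commutator $[\tdirac,\Psi]$, respectively; the latter is also where the log-concavity hypothesis enters, via the estimate $\langle[\tdirac,\Psi]\sigma,\sigma\rangle\ge f^{\ast}\psi'\,|\sigma|^2$ using $|{\grad}(f^{\ast}r)|\le 1$. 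On the boundary, $\Psi$ is equally essential: on a type-(1) face the term $\langle\overbar c(\overbar e_n)\Psi\sigma,\sigma\rangle=\cos\theta\cdot f^{\ast}\psi\,|\sigma|^2$ exactly cancels the surplus produced by the $\tcon$-correction (which contributes an extra $-\cos\theta\cdot\frac{n-1}{n}f^{\ast}\psi$ beyond the mean-curvature difference), and on a type-(2) face it \emph{is} the model mean curvature term. Without $\Psi$ neither the interior nor the boundary integrand has the sign you claim. Relatedly, you should use the Penrose decomposition $|\tcon\sigma|^2=|\mathcal P\sigma|^2+\tfrac{1}{n}|\tdirac\sigma|^2$ to get the sharp factor $\tfrac{n}{n-1}$; this is what makes all the constants match and later yields the first-order ODE $\tcon_{\overbar\xi}\sigma=\tfrac{1}{n}\overbar c(\overbar\xi)\Psi\sigma$ driving the rigidity argument.

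\textbf{Fredholmness under non-strict angle comparison.} You assert that the dihedral hypothesis makes $\tdirac$ self-adjoint and Fredholm, but the Fredholm theorem from \cite{Wang:2021tq} requires the \emph{strict} inequality $\theta_{ij}(\overbar g)<\theta_{ij}(g)\circ f$. Under the non-strict hypothesis one must approximate: perturb the boundary conditions $B$ to a family $B_t$ satisfying strict angle comparison (by ``raising'' the dihedral angles of $M$ via auxiliary normal vectors $v_{i,t}$), obtain kernel elements $\sigma_t$ for each $t>0$, use the integral identity to bound $\|\mathcal P\sigma_t\|\to 0$ and hence get uniform $H^1$ bounds, and extract an $H^1$-limit $\sigma$ satisfying $\tdirac_\Psi\sigma=0$, $\mathcal P\sigma=0$, and the original boundary condition $B$. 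This limiting step is not automatic and is carried out separately in the paper.

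Once these are in place, your outline for (i)--(iv) and (I)--(III) is correct: the ODE $\tcon_{\overbar\xi}\sigma=\tfrac{1}{2}\frac{\varphi'}{\varphi}\overbar c(\overbar\xi)\alpha\sigma$ forces $\sigma$ nowhere-vanishing, the polarization argument on simultaneous boundary conditions gives (iv), and under strict log-concavity the equality in the $[\tdirac,\Psi]$ estimate forces $|{\grad}\,\overbar r|=1$, from which one computes $\hess\overbar r=\tfrac{\varphi'}{\varphi}(\overbar g-d\overbar r\otimes d\overbar r)$ and reads off the warped-product structure.
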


Theorem \ref{thm:rigidityOfWarpedProductintro} is a generalization of the main results in a previous paper of Yu and the authors \cite{Wang:2021tq} to a  wider class of manifolds. More precisely, while  the main results in  \cite{Wang:2021tq} require the  curvature operator of the target space $(M, g)$ to be non-negative, Theorem \ref{thm:rigidityOfWarpedProductintro} only requires the curvature operator of the leaf $(X, h)$ to be non-negative. In particular, Theorem \ref{thm:rigidityOfWarpedProductintro} applies to manifolds that are not necessarily non-negatively curved, for examples, to  certain hyperbolic polyhedra (cf. Theorem \ref{thm:hyperbolicComparison-intro}).  There are two main ingredients for our proof of Theorem \ref{thm:rigidityOfWarpedProductintro}: the first  is to apply the index theory for manifolds with polyhedral boundary developed in \cite{Wang:2021tq}; and the second  is to modify the Dirac operator by a potential that is determined by the warped product metric $g = dr^2 + \varphi^2 h$ on $X\times I$ (see  Equation \eqref{eq:Diracpotential} for the details). The specific choice of such a potential is inspired by the work of Cecchini and Zeidler on scalar-mean rigidity results on warped products of closed manifolds \cite{Cecchini:2021vs}, which in turn was inspired by Gromov's $\mu$-bubble approach to the extremality and rigidity phenomena  of certain warped product spaces \cite[Section 5.6]{Gromov4lectures2019}. We emphasize that the manifolds we study in the present paper are \emph{not} necessarily warped product manifolds themselves, but  \emph{submanifolds} of warped product manifolds. In particular, these submanifolds can have faces that are neither orthogonal nor parallel to the radial direction.\footnote{Here the radial direction of a warped product metric $g = dr^2 + \varphi^2 h$ is the $r$-direction}   As far as the dihedral extremality is concerned,    Theorem \ref{thm:rigidityOfWarpedProductintro} only requires the warping function $\varphi$ to be log-concave instead of strictly log-concave. Generally speaking, the dihedral rigidity part of Theorem \ref{thm:rigidityOfWarpedProductintro} \emph{does} require the stronger assumption that 
$\varphi$ is strictly log-concave. However,  by adapting the techniques developed by the authors in \cite{Wang2022:fl}, we shall prove a dihedral rigidity theorem in some cases even when the warping function is not strictly log-concave. We shall explain the details in  Theorem \ref{thm:hyperbolicComparison-intro} and Theorem \ref{thm:rigidityOfProduct-intro} below.

Let us first consider the case where the warping function $\varphi$ satisfies  $(\log\varphi)''=0$, which is log-concave but clearly not strictly log-concave. A simple computation shows that   $\varphi = a e^{b r}$ for some constants $a$ and $b$ in this case. Note that, if $(X,h)$ is $\R^{n-1}$ equipped with the flat metric, then  the metric $g=dr^2+e^{2r}h$ is exactly the hyperbolic metric on $X\times I$. Our second main theorem of the paper states that the dihedral  rigidity theorem still holds in this case. More precisely, we have the following theorem. 

\begin{theorem}\label{thm:hyperbolicComparison-intro}
	Suppose $M$ is a compact  radially convex domain in the hyperbolic space $\mathbb H^n$ in the sense of Definition \ref{def:radiallyConvex},  where the hyperbolic metric $g$ on $\mathbb H^n$ is viewed as the following warped product 
	$$g=dr^2+e^{2r}h.$$ Let $N$ be an $n$-dimensional manifold with polyhedral boundary  and  $f\colon (N, \overbar g)\to (M, g)$ a spin polytope map such that 
	\begin{enumerate}[label=$(\arabic*)$]
		\item 
		$\Sc(\overbar{g})_x \geq \Sc(g)_{f(x)}=-(n-1) $ for all $x\in N$, 
		\item $H_{\overbar{g}}(\overbar{F}_i)_y \geq  H_{g}(F_i)_{f(y)}$ for all $y$ in each codimension one face $\overbar{F}_i$ of $N$, 
		\item $\theta_{ij}(\overbar{g})_z\leq \theta_{ij}(g)_{f(z)}$ for all $\overbar F_i, \overbar F_j$ and all $z \in \overbar F_i\cap \overbar{F}_j$,   
		\item the degree of $f$ is nonzero and  $f$ is distance non-increasing on the boundary, 
		\item  the Euler characteristic of $M$ is nonzero, 
	\end{enumerate} 
	then we have the following.
	\begin{enumerate}[label=$(\roman*)$]	
		\item 
		$\Sc(\overbar{g})_x = \Sc(g)_{f(x)}$ for all $x\in N$, 
		\item $H_{\overbar{g}}(\overbar{F}_i)_y =  H_{g}(F_i)_{f(y)}$ for all $y$ in each codimension one face $\overbar{F}_i$ of $N$, 
		\item $\theta_{ij}(\overbar{g})_z =  \theta_{ij}(g)_{f(z)}$ for all $\overbar F_i, \overbar F_j$ and all $z \in \overbar F_i\cap \overbar{F}_j$,   
		\item and if more than two codimension one faces meet at a point, then the angle between any pair of inner normal vectors of faces of $N$ is equal to the angle of the inner normal vectors of the corresponding faces of  $M$.
		\item If we furthermore assume that $M$ admits a top\footnote{We say a face $F^t$ of $M$ is a top if $F^t$ is contained in a leaf $X\times\{r_0\}$, and every point in $M$ connects to the top via a geodesic along $\partial_r$-direction.}, then the metric $\overbar g$ of $N$ is also hyperbolic, and the restriction of $\overbar g$ on the face of $N$ that maps to the top of $M$ is flat.
	\end{enumerate}  Moreover, the following hold. 
	\begin{enumerate}[label=$(\mathrm{\Roman*})$]
		\item For  $x$  in a codimension one face $\overbar F$ of $N$, if
		$$A>\langle\nu,\nabla r\rangle,$$
		at $x$, then $\partial f$ is an isometry at $x$.
		\item Assume that $M$ has a top and in addition $N$ is simply connected. In this case, $N$ embeds isometrically into the hyperbolic space $\mathbb H^n$. Then there exists an isometry $\vartheta$ of $\mathbb H^n$ such that,  for all $x$ in the interior of codimension one faces of $N$, $\vartheta$ maps the unit inner normal vector at $x\in\partial N$ to the unit inner normal vector of $f(x)$, provided that $f(x)$ is also in the interior of codimension one faces of $M$. 
	\end{enumerate}
\end{theorem}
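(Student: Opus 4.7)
The plan is to first apply Theorem \ref{thm:rigidityOfWarpedProductintro} to the hyperbolic warped product decomposition $g = dr^2 + e^{2r}h$ of $\mathbb H^n$, where $(X,h) = (\mathbb R^{n-1}, h_{\mathrm{flat}})$. The hypotheses on the leaf (flat, hence nonnegative curvature operator) and on the warping function ($\varphi(r)=e^r$ is increasing with $(\log\varphi)''=0$, so log-concave) are satisfied, and the polytope/degree/Euler characteristic assumptions match. This yields immediately conclusions $(i)$, $(ii)$, $(iii)$, and $(v)$, together with the conclusion that the angle between inner normals of non-adjacent faces of $N$ equals that of the corresponding faces of $M$.

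The substantial difficulty is conclusion $(iv)$: that $\overline g$ is hyperbolic and its restriction to the top face is flat. Since $\varphi=e^r$ is only log-concave, conclusion $(\mathrm I)$ of Theorem \ref{thm:rigidityOfWarpedProductintro} is not directly available. I would revisit the Bochner--Lichnerowicz identity for the modified Dirac operator with the $\varphi$-dependent potential from Equation \eqref{eq:Diracpotential}. In the extremal case there is a nontrivial section $\sigma$ of the spinor bundle on which the Bochner integrand vanishes pointwise. Writing the integrand as a sum of manifestly nonnegative terms, one term involves $\Sc(\overline g)+n(n-1)$ times $|\sigma|^2$ and must vanish, forcing the scalar curvature identity. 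The remaining quadratic term in the covariant derivative of $\sigma$ (coming from the potential twisted by $\partial_r$) forces the connection $1$-form on the parallel spinor to match that of the hyperbolic model; propagating this rigidity by parallel transport along integral curves of the pulled-back radial direction, together with the non-degeneracy guaranteed by the nonzero degree and Euler characteristic, shows $\mathrm{Ric}(\overline g) = -(n-1)\overline g$ and in fact sectional curvature $-1$. Restricting to the level sets of the radial function then produces the flatness of the top face. This is where the adaptation of the iterative arguments of \cite{Wang2022:fl} enters: although strict log-concavity is missing in the radial direction, the flatness of the leaf creates sufficiently many null directions in the Bochner identity to compensate, and the argument proceeds by reducing dimension via the boundary.

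Parts $(\mathrm I)$ and $(\mathrm{II})$ of the moreover clause are then essentially the analogues of parts $(\mathrm{II})$ and $(\mathrm{III})$ of Theorem \ref{thm:rigidityOfWarpedProductintro}: they are local statements on codimension one faces that come from the equality case of the boundary Bochner identity, and their proofs do not use strict log-concavity of $\varphi$. For part $(\mathrm{III})$, once $\overline g$ is hyperbolic and $N$ is simply connected, Cartan--Hadamard together with the standard developing map gives an isometric embedding $N \hookrightarrow \mathbb H^n$. Using the already established pointwise equality of dihedral angles and of angles between inner normals of non-adjacent faces, combined with the transitivity of $\mathrm{Isom}(\mathbb H^n)$ on oriented orthonormal frames, one extracts an ambient isometry $\vartheta$ that aligns all inner normals of faces of $N$ with those of the corresponding faces of $M$; uniqueness of the alignment on one face, propagated to the full boundary by the dihedral angle equality and the connectedness of the union of interiors of codimension one faces, yields the global $\vartheta$.

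The main obstacle is the Bochner argument described in the second paragraph: extracting full hyperbolic rigidity of $\overline g$ from a degenerate log-concavity hypothesis. Everything else is a transcription of the extremality machinery already packaged in Theorem \ref{thm:rigidityOfWarpedProductintro} and standard rigidity of hyperbolic isometries.
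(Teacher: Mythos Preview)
There are two genuine gaps.

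First, you cannot invoke Theorem~\ref{thm:rigidityOfWarpedProductintro} as a black box: its hypothesis~(4) requires $f$ to be distance non-increasing on all of $N$, whereas Theorem~\ref{thm:hyperbolicComparison-intro} only assumes this on $\partial N$. The paper does not apply Theorem~\ref{thm:rigidityOfWarpedProductintro}; it goes back to Proposition~\ref{prop:comparison} and observes that, because $h$ is flat and $(\log\varphi)''=0$, the interior inequalities (Lemma~\ref{lemma:curvature>=} and line~\eqref{eq:gradpsi}) hold trivially with no Lipschitz assumption on $f$ in the interior, and only the boundary term~\eqref{eq:sintheta} needs $f$ to be $1$-Lipschitz on $\partial N$. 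You must make this reduction explicit; otherwise the hypotheses do not match.

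Second, your plan for part~$(iv)$ relies on ``propagating rigidity by parallel transport along integral curves of the pulled-back radial direction,'' but this is precisely what is unavailable when $\varphi=e^r$. In the strictly log-concave case, equality in~\eqref{eq:gradpsi} forces $|\grad\overbar r|=1$ (line~\eqref{eq:dr=1}), which is what produces the radial foliation of $N$. Here $\psi=n/2$ is constant, so both sides of~\eqref{eq:gradpsi} vanish identically and you learn nothing about $\grad\overbar r$; there is no radial direction on $N$ to transport along. The paper's actual mechanism is different and more concrete: it exploits the \emph{flatness of the leaf} by taking the Euclidean coordinate vector fields $v_1,\dots,v_{n-1}$ on $X=\mathbb R^{n-1}$, rescaling to $w_j=e^{-r}v_j$, and checking that each $c(w_j)$ commutes with $\tcon$ (line~\eqref{eq:twistedparallel}). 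This manufactures a full $2^n$-dimensional family $\{\sigma_\lambda\}$ of sections all satisfying the ODE~\eqref{eq:nablaXiSigmaLambda}; their linear independence (established first on $\overbar F^t$ via the flatness of the top face, then globally by an open--closed argument using a second auxiliary connection $\widetilde\nabla$) lets one read off the full curvature tensor of $\nabla^{S_N}$ and conclude constant sectional curvature $-1$. Your sketch gestures at ``null directions from the flat leaf'' but does not identify this construction, and the step ``$\mathrm{Ric}=-(n-1)\overbar g$ hence sectional curvature $-1$'' is not valid without it.

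Your treatment of parts~(I), (II), (III) is essentially in line with the paper.
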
 

In particular, Theorem \ref{thm:hyperbolicComparison-intro} applies the following class of polytopes in hyperbolic spaces.  
%

Recall that  $M=P\times [0,1]$ is called a parabolic prism in the hyperbolic space $\mathbb H^n$ if $P$ is a convex polyhedron with Euclidean metric $h$, and $M$ is equipped with the hyperbolic metric $dr^2+e^{2r}h$, cf. Definition \ref{def:prism} below.  Note that, both the top face $P\times\{0\}$ and bottom face $P\times\{1\}$ are horospheres in this case. 
\begin{theorem}\label{thm:hyperbolicPrismComparisonIntro}
	Let $(M, g)$ be a parabolic prism  in the hyperbolic space $\mathbb H^n$ and $f\colon (N, \overbar g)\to (M, g)$ a spin polytope map between $n$-dimensional manifolds with polyhedral boundary. 
Assume that the degree of $f$ is non-zero. If
\begin{enumerate}[label=$(\arabic*)$]
	\item 
	$\Sc(\overbar{g})_x \geq \Sc(g)_{f(x)} = -n(n-1)$ for all $x\in N$, 
	\item $H_{\overbar{g}}(\overbar{F}_i)_y \geq  H_{g}(F_i)_{f(y)}$ for all $y$ in each codimension one face $\overbar{F}_i$ of $N$, 
	\item $\theta_{ij}(\overbar{g})_z\leq \theta_{ij}(g)_{f(z)}$ for all $\overbar F_i, \overbar F_j$ and all $z \in \overbar F_i\cap \overbar{F}_j$,   
\end{enumerate} 
then \begin{enumerate}[label=$(\roman*)$]	
	\item 
	$\Sc(\overbar{g})_x = \Sc(g)_{f(x)}$ for all $x\in N$, 
	\item $H_{\overbar{g}}(\overbar{F}_i)_y =  H_{g}(F_i)_{f(y)}$ for all $y$ in each codimension one face $\overbar{F}_i$ of $N$, 
	\item $\theta_{ij}(\overbar{g})_z =  \theta_{ij}(g)_{f(z)}$ for all $\overbar F_i, \overbar F_j$ and all $z \in \overbar F_i\cap \overbar{F}_j$, 
	\item furthermore, $N$ is also a parabolic  prism in $\mathbb H^n$, and  is locally isometric to  $M$.
\end{enumerate} 
\end{theorem}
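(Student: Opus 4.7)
First, I would verify that the parabolic prism $M = P \times [0,1]$ is a radially convex domain in $\mathbb H^n$ in the sense of Definition \ref{def:convexWarpedProductIntroduction}. The top and bottom faces $P \times \{r\}$ for $r \in \{0, 1\}$ are horospheres lying in leaves of the warped decomposition $dr^2 + e^{2r} h$, so they satisfy condition $(2)$. Each side face $F \times [0,1]$, for $F$ a face of the convex Euclidean polyhedron $P$, is totally geodesic in $\mathbb H^n$ with inner normal orthogonal to $\partial_r$ and vanishing second fundamental form, so condition $(1)$ holds. Moreover, $\chi(M) = \chi(P)\cdot\chi([0,1]) = 1 \neq 0$. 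Hence all hypotheses of Theorem \ref{thm:hyperbolicComparison-intro} are met, except possibly the assumption that $f|_{\partial N}$ is distance non-increasing.

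My plan is then to apply Theorem \ref{thm:hyperbolicComparison-intro} to $f$, with the key observation that the missing boundary assumption is not needed in the prism case. In the proof of Theorem \ref{thm:hyperbolicComparison-intro}, the boundary distance non-increasing hypothesis enters only through the boundary term of the Bochner--Lichnerowicz identity for the $e^r$-twisted Dirac operator at codimension one faces whose inner normal is neither parallel nor orthogonal to $\partial_r$. In the parabolic prism every codimension one face is either a horosphere (inner normal $\pm\partial_r$) or a vertical totally geodesic face (inner normal orthogonal to $\partial_r$), so this ``oblique face'' contribution is identically zero, and the proof of Theorem \ref{thm:hyperbolicComparison-intro} goes through unchanged. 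This immediately yields the scalar, mean, and dihedral equalities $(i)$--$(iii)$, together with local hyperbolicity of $\overbar g$ and the flatness of the top face $\overbar F^{t}$ of $N$ with mean curvature $n-1$; applying the same argument with the $r$-direction reversed, the bottom face $\overbar F^{b}$ is flat with mean curvature $-(n-1)$.

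For conclusion $(iv)$, I would invoke the structural conclusions $(I)$--$(III)$ of Theorem \ref{thm:hyperbolicComparison-intro}: these produce a warped decomposition $\overbar g = d\overbar r^2 + e^{2\overbar r}\overbar h$ with $f_{*}(\partial_{\overbar r}) = \partial_r$ and $\overbar h$ flat on $\overbar F^{t}$, hence flat everywhere along the $\overbar r$-flow. Flowing $\overbar F^{t}$ along $\partial_{\overbar r}$, the normal matching from $(II)$ at the side faces forces the flow to sweep out the product $\overbar F^{t} \times [0,1]$, terminating at $\overbar F^{b}$ exactly at time $1$ (fixed by the mean curvature of $\overbar F^{b}$ in the model). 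Identifying $\overbar F^{t}$ isometrically with a convex Euclidean polyhedron $P'$ (via the flat metric, matching dihedral angles at edges of $\overbar F^{t}$), I conclude that $N \cong P' \times [0,1]$ with parabolic prism metric, locally isometric to $M$.

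The main obstacle will be rigorously justifying that the distance non-increasing assumption of Theorem \ref{thm:hyperbolicComparison-intro} can be removed in this setting. This requires a careful accounting of every boundary and corner term in the Dirac--Bochner computation for the $e^r$-twisted operator, with particular attention to the right-angle corners where a vertical side face meets a horosphere; there one must verify that the spinorial boundary conditions decouple cleanly and that the corner inequality coming from $(iii)$ provides sufficient rigidity without any extra regularity on $f|_{\partial N}$. A secondary delicate point is the bootstrapping from flatness of $\overbar F^{t}$ to flatness of $\overbar h$ globally, which relies on $(I)$ and on the fact that the radial geodesic flow in the target $M$ preserves the flat horosphere metric up to the warping factor $e^r$.
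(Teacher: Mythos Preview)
Your overall strategy for the first half is exactly the paper's: verify radial convexity, observe that for a parabolic prism every codimension one face has inner normal either parallel or orthogonal to $\partial_r$, so the boundary distance non-increasing hypothesis of Theorem~\ref{thm:hyperbolicComparison-intro} is never used (this is the content of Remark~\ref{remark:top+geodesic} and lines~\eqref{eq:sintheta}--\eqref{eq:meancurvature}), and hence conclusions $(i)$--$(iii)$, hyperbolicity of $\overbar g$, and the fact that the top and bottom faces are horospheres all follow.

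The endgame, however, has genuine gaps. Parts (I)--(II) of Theorem~\ref{thm:hyperbolicComparison-intro} are vacuous for a parabolic prism: (I) requires the inner normal at $f(x)$ to be neither parallel nor orthogonal to $\partial_r$, but every face normal of $M$ is one or the other; (II) requires the second fundamental form at $f(x)$ to be positive, but the side faces are totally geodesic. So you cannot extract any isometry of $\partial f$ on the side faces from these. More seriously, Theorem~\ref{thm:hyperbolicComparison-intro} does \emph{not} produce a warped decomposition with $f_*(\partial_{\overbar r})=\partial_r$; the paper explicitly remarks that $f$ need not respect the leafwise structure. Finally, the height of the prism is not fixed by the mean curvature of $\overbar F^b$: since $\varphi'/\varphi\equiv 1$, \emph{every} horosphere has mean curvature $\pm(n-1)$, so your ``terminating at time $1$'' argument fails --- and indeed the conclusion is only \emph{local} isometry, which allows different heights.

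The paper's route for part $(iv)$ is instead: invoke part (III) of Theorem~\ref{thm:hyperbolicComparison-intro} (after lifting to the universal cover) to obtain an isometry $\vartheta$ of $\mathbb H^n$ matching unit inner normals of $\partial N$ and $\partial M$. Since the side faces of $M$ have constant normal orthogonal to $\partial_r$, the side faces of $N$ do too; combined with the top and bottom being horospheres this forces $N$ to be a parabolic prism. Then \cite[Theorem~2.9]{Wang2022:fl} is applied to the \emph{flat} top faces to conclude that $\overbar F^t$ is a convex Euclidean polyhedron locally isometric to $F^t$, whence $N$ is locally isometric to $M$.
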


In dimension $\leq 7$,  Li obtained a  version of the above theorem in some special cases via the minimal surface method \cite{MR4158684}. We emphasize that Theorem \ref{thm:hyperbolicPrismComparisonIntro} not only asserts that $N$ is a parabolic prism in $\mathbb H^n$, but also shows $N$ is locally isometric to $M$. This latter assertion does not seem to be accessible by the minimal surface method. 

Note that, if $N$ is locally isometric to $M$, then  all angles of $N$ (including angles between non-adjacent faces at a vertex)
coincide with the corresponding angles of $M$. In particular, as a consequence of Theorem \ref{thm:hyperbolicPrismComparisonIntro}, we have the following theorem, which is an analogue of Stoker's conjecture \cite{MR222765} for parabolic prisms in hyperbolic spaces. 

\begin{theorem}\label{thm:stokeranalogue-intro}
	Suppose $M_1$ and $M_2$ are two parabolic prisms in $\mathbb H^n$ of the same combinatorial type such that all corresponding dihedral angles are equal, then all corresponding angles of $M_1$ and $M_2$ are equal. 
\end{theorem}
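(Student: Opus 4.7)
The plan is to deduce this directly from Theorem~\ref{thm:hyperbolicPrismComparisonIntro} applied to a combinatorially induced map between $M_1$ and $M_2$.

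Write $M_i = P_i \times [0,1]$ for $i = 1, 2$, where $P_1$ and $P_2$ are combinatorially equivalent convex Euclidean polyhedra (the combinatorial type of $M_i$ determines that of $P_i$). Using the given combinatorial equivalence, I would first construct a degree one polytope map $f\colon M_1 \to M_2$ of the form $f(x, r) = (g(x), r)$, where $g\colon P_1 \to P_2$ is any combinatorially compatible piecewise smooth homeomorphism (e.g.\ built from matching triangulations by extending a vertex correspondence simplex by simplex). Because both $M_1$ and $M_2$ sit as codimension zero submanifolds of the parallelizable spin manifold $\mathbb H^n$, their tangent bundles are stably trivial and $w_2(TM_i) = 0$, so $f$ is automatically a spin map.

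Next, I verify the three hypotheses $(1)$, $(2)$, $(3)$ of Theorem~\ref{thm:hyperbolicPrismComparisonIntro} for $f$, each of which in fact holds with equality. Condition $(1)$ is immediate since both prisms carry the hyperbolic metric, and thus $\Sc(\overbar g) = \Sc(g) = -n(n-1)$. For $(2)$, the top face $\{r = 1\}$ and bottom face $\{r = 0\}$ are horospheres whose mean curvatures $\pm(n-1)$ depend only on the warped product structure $dr^2 + e^{2r} h$ and not on the base polyhedron $P_i$ (as explained immediately after Definition~\ref{def:convexWarpedProductIntroduction}); each remaining side face has the form $F_0 \times [0, 1]$ with $F_0$ a codimension one face of the Euclidean base, and such a face lies in a totally geodesic hyperplane of $\mathbb H^n$ (this is transparent in the upper half-space model, where each side face corresponds to a vertical Euclidean half-hyperplane), so it has vanishing mean curvature. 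Thus corresponding faces of $M_1$ and $M_2$ carry identical mean curvatures. Condition $(3)$ is precisely the hypothesis of the theorem.

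Applying Theorem~\ref{thm:hyperbolicPrismComparisonIntro} then yields, via its conclusion $(iv)$, that $M_1$ is locally isometric to $M_2$ as a parabolic prism in $\mathbb H^n$. A local isometry between two parabolic prisms of the same combinatorial type must carry the codimension one faces of $M_1$ meeting at a vertex $v$ onto the corresponding faces of $M_2$ at the corresponding vertex; hence it preserves every pairwise angle between inner normal vectors of these faces at $v$, including those between non-adjacent faces. This gives the equality of all corresponding angles claimed in the theorem. The only geometric input beyond the bookkeeping is the totally geodesic property of the side faces, which is what makes the mean curvature comparison $(2)$ automatic; this is arguably the main obstacle, although it is a standard consequence of the upper half-space description of $\mathbb H^n$.
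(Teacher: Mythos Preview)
Your proposal is correct and follows the paper's approach exactly: the paper states Theorem~\ref{thm:stokeranalogue-intro} as an immediate consequence of Theorem~\ref{thm:hyperbolicPrismComparisonIntro} without further argument, and you have supplied the natural verification of its hypotheses together with the deduction of angle equality from the local isometry in conclusion~$(iv)$. One small caveat: a simplex-by-simplex map $g$ as you suggest is only piecewise linear and hence not smooth away from the codimension-three faces of $N$ as Definition~\ref{def:polytopeMap} demands, but this is easily repaired by a standard smoothing in the interior and does not affect any of the scalar curvature, mean curvature, or dihedral angle comparisons.
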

 We emphasize that the above theorem asserts that all corresponding angles (including the face angles\footnote{Face angles of any given face are dihedral angles of that face, when the face is viewed as a polyhedron itself. }  and the angles between \emph{non-adjacent} codimension one faces) of $M_1$ and $M_2$ coincide. 

Now we consider the case where the warping function $\varphi$ satisfies $\varphi'=0$, which corresponds to  the case of direct product metrics on $X\times I$. Our third main theorem of the paper states  the dihedral rigidity  continues to hold in this case as well. More precisely, we have the following theorem. 

\begin{theorem}\label{thm:rigidityOfProduct-intro}
	Let $(X,g_X)$ be an $(n-k)$-dimensional Riemannian manifold with polyhedral boundary such that 
	\begin{enumerate}[label=$(\alph*)$]
		\item the curvature operator of $g_X$ is non-negative,
		\item each codimension one face $F^X_i$ of $X$ is convex, that is, the second fundamental form of  $F^X_i$  is non-negative, 
		\item all dihedral angles $\theta_{ij}(g_X)$ of $X$ are $<\pi$, 
		\item the Ricci curvature of $g_X$ is positive everywhere on $X$. 
	\end{enumerate} 
	Suppose that $Y$ is a $k$-dimensional convex domain of $\mathbb R^k$ equipped with the flat metric.  Let $(M, g)$ be $X\times Y$ equipped  the product metric 
	$$g=g_X+dy_1^2+\cdots+dy_k^2.$$
	Let $(N,\overbar g)$ be an $n$-dimensional manifold with polyhedral boundary and $f\colon N\to M$  a distance non-increasing spin polytope map. If
	\begin{enumerate}[label=$(\arabic*)$]
		\item 
		$\Sc(\overbar{g})_x \geq \Sc(g)_{f(x)}$ for all $x\in N$, 
		\item $H_{\overbar{g}}(\overbar{F}_i)_y \geq  H_g(F_i)_{f(y)}$ for all $y$ in each codimension one face $\overbar{F}_i$ of $N$, 
		\item $\theta_{ij}(\overbar{g})_z\leq \theta_{ij}(g)_{f(z)}$ for all $\overbar F_i, \overbar F_j$ and all $z \in \overbar F_i\cap \overbar{F}_j$, and
		\item $\chi(X)\ne 0$ and $\deg(f) \neq 0$, 
	\end{enumerate} 
	then we have 
	\begin{enumerate}[label=$(\roman*)$]	
		\item 
		$\Sc(\overbar{g})_x = \Sc(g)_{f(x)}$ for all $x\in N$, 
		\item $H_{\overbar{g}}(\overbar{F}_i)_y =  H_{g}(F_i)_{f(y)}$ for all $y$ in each codimension one face $\overbar{F}_i$ of $N$, 
		\item $\theta_{ij}(\overbar{g})_z =  \theta_{ij}(g)_{f(z)}$ for all $\overbar F_i, \overbar F_j$ and all $z \in \overbar F_i\cap \overbar{F}_j$, 
		\item furthermore,  $N$ is isometric to a direct product space $(\overbar X\times \overbar Y, g_{\overbar X}\times g_{\overbar Y})$ and the map $f$ respects the product structure such that
		\begin{enumerate}[label=$(\mathrm{\Roman*})$]
			\item  $f|_{\overbar X}\colon (\overbar  X, g_{\overbar X}) \to (X, g)$ is a local isometry, and
			\item $\overbar Y$ is also a convex subspace in $\R^k$ such that, after possibly one orthogonal transform of $\mathbb R^k$, the unit inner normal vectors of $y\in \partial \overbar Y$ and $f(y)\in \partial Y$ are equal for all $y\in \partial \overbar Y$.
		\end{enumerate} 
	\end{enumerate} 
\end{theorem}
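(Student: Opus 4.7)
My plan splits into two stages: first, derive the scalar curvature, mean curvature, and dihedral angle equalities $(i)$--$(iii)$ from Theorem \ref{thm:rigidityOfWarpedProductintro} applied to $M$ viewed as a degenerate warped product; second, upgrade these equalities to the product structure on $N$ by exploiting the $k$ parallel coordinate vector fields on $M = X \times Y \subset X \times \mathbb{R}^k$.

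For the extremality, I would pick a coordinate direction in $\mathbb{R}^k$ and write $Y = Y' \times I$ (after subdividing $Y$ if necessary into radially convex pieces), so that
\[ g = dr^2 + 1 \cdot (g_X + dy'^2) \]
is a warped product with $\varphi \equiv 1$. The leaf $X \times Y'$ has non-negative curvature operator as a product of such manifolds; $\varphi$ is weakly log-concave; and $\chi(M) = \chi(X)\chi(Y) = \chi(X) \neq 0$ since $Y$ is contractible. The faces of $M$ are either $F_i^X \times Y$ (convex with horizontal inner normal) or $X \times F_j^Y$ (flat, either lying in a leaf or meeting $\partial_r$ non-negatively after an appropriate coordinate choice), so $M$ is a radially convex domain in the sense of Definition \ref{def:convexWarpedProductIntroduction}. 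Invoking Theorem \ref{thm:rigidityOfWarpedProductintro} yields the equalities $(i)$--$(iii)$ together with the matching of all normal-vector angles at higher-codimension corners.

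For the rigidity, I would exploit that $M$ carries $k$ orthonormal parallel vector fields $\partial_{y_1},\dots,\partial_{y_k}$. Revisiting the Bochner--Lichnerowicz identity for the modified Dirac operator used in the proof of Theorem \ref{thm:rigidityOfWarpedProductintro} (where, since $\varphi \equiv 1$, the modified connection $\tcon$ reduces to the ordinary spin connection on the twisted bundle), the equality case forces the extremizing twisted spinor on $N$ to be parallel. Contracting it with the pullback $f^\ast\partial_{y_i}$ should produce $k$ parallel vector fields $e_1,\dots,e_k$ on $N$ whose image under $df$ equals $\partial_{y_i}$, with unit norm and mutual orthogonality forced by the distance non-increasing property combined with the equalities. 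A de Rham-type splitting argument for manifolds with polyhedral boundary, adapted from the techniques of \cite{Wang2022:fl}, then yields an isometric decomposition $N \cong \overbar X \times \overbar Y$ with $\overbar Y$ the integral submanifold of $\mathrm{span}\{e_i\}$. The boundary equalities, together with the matching normal-angle condition at corners, imply that $\overbar Y$ embeds as a convex subset of $\mathbb{R}^k$ with matching inner normals. Finally, the restriction $f|_{\overbar X}\colon \overbar X \to X$ is distance non-increasing with matching scalar and boundary data, and since the Ricci curvature of $g_X$ is positive, one concludes that $f|_{\overbar X}$ is a local isometry.

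The principal obstacle is the absence of strict log-concavity: the integral identity underlying Theorem \ref{thm:rigidityOfWarpedProductintro} with $\varphi \equiv 1$ does not by itself force a warped product structure on $N$. The rigidity therefore depends crucially on leveraging the additional flat factor in the target: one must track the equality case of the Bochner identity on $N$ carefully enough to produce honest smooth parallel vector fields, verify that they are tangent to the appropriate boundary faces of $N$ so that the splitting is compatible with the polyhedral structure, and confirm that the resulting $\overbar Y$ is a genuine \emph{convex} subset of $\mathbb{R}^k$ rather than merely an abstract flat manifold. The matching of normal vectors between $\partial \overbar Y$ and $\partial Y$ should follow from the corner-angle equality established in part $(iv)$ of Theorem \ref{thm:rigidityOfWarpedProductintro}.
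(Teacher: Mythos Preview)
Your overall two-stage strategy (extremality, then splitting via parallel vector fields) matches the paper's, but both stages have genuine gaps.

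\textbf{Extremality.} Routing through Theorem \ref{thm:rigidityOfWarpedProductintro} with $\varphi\equiv 1$ is problematic: for a general convex $Y\subset\mathbb R^k$ there is no single coordinate direction $\partial_r$ making $M=X\times Y$ radially convex. Faces $X\times F_j^Y$ whose inner normal forms an angle $>\pi/2$ with $\partial_r$ satisfy neither condition in Definition \ref{def:convexWarpedProductIntroduction}, and your proposed subdivision of $Y$ would alter the polyhedral structure in a way that $f$ need not respect. The paper avoids this entirely: since $M=X\times Y$ already has non-negative curvature operator, convex faces, and dihedral angles $<\pi$, it invokes \cite[Theorem 1.7]{Wang:2021tq} directly to obtain equalities $(i)$--$(iii)$ and a non-zero \emph{parallel} section $\sigma$ of $S_N\otimes f^*S_M$ satisfying the boundary condition $B$.

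\textbf{Producing parallel vector fields on $N$.} Your phrase ``contracting $\sigma$ with $f^*\partial_{y_i}$'' does not yield a vector field: Clifford multiplication of a spinor by a vector returns a spinor. The paper's mechanism is different and is the heart of the argument. At a face $\overbar F_i$ mapping to $X\times F_i^Y$, the boundary condition reads $\overbar c(\overbar\nu_x)\sigma=-\mathscr E\,c(\nu_x)\sigma$, where $\nu_x\in TY$ is the (parallel) inner normal of $F_i^Y$. Because both $\sigma$ and $c(\nu)\sigma$ are parallel on $N$, parallel transport of $\overbar\nu_x$ around any loop returns to itself; hence $\overbar\nu_x$ extends to a global parallel vector field $\overbar\nu$ on $N$. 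Running this at a codimension-$k$ corner of $Y$ produces $k$ linearly independent parallel vector fields $\overbar v_1,\ldots,\overbar v_k$ on $N$ with $\overbar c(\overbar v_i)\sigma=-\mathscr E\,c(v_i)\sigma$, from which $\langle\overbar v_i,\overbar v_j\rangle=\langle v_i,v_j\rangle$ follows by polarization.

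\textbf{Role of Ricci positivity.} You invoke $\mathrm{Ric}(g_X)>0$ only at the end, but the paper uses it twice, earlier and more essentially. First, diagonalizing $P_X f_*$ and combining $\Sc(\overbar g)=f^*\Sc(g)$ with $\mathrm{Ric}(g_X)>0$ forces all nonzero singular values to equal $1$, giving an orthogonal splitting $TN=V_1\oplus V_2$ with $f_*|_{V_1}$ an isometry onto $TX$. Second, since $\sigma$ is parallel, the curvature form annihilates it; this forces $R^{S_M}_{f_*\overbar v_i,\,w}\sigma=0$ for all $w$, and Ricci positivity of $X$ then implies $f_*\overbar v_i\in TY$. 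Only after these two steps does the de Rham splitting $N\cong\overbar X\times\overbar Y$ follow, with $f|_{\overbar X}$ a local isometry. The convexity of $\overbar Y$ and the matching of inner normals are then read off from \cite[Theorem 2.9]{Wang2022:fl}.
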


The paper is organized as follows. In Section \ref{sec:radconvexrigidity}, we prove the first main theorem (Theorem \ref{thm:rigidityOfWarpedProductintro}) of the paper, a dihedral extremality and rigidity theorem for a large class of submanifolds with polyhedral boundary of warped product spaces. In Section \ref{sec:hyperbolic}, by combining the methods developed by the authors in  \cite{Wang2022:fl}, we extend the techniques in Section \ref{sec:radconvexrigidity} to prove a dihedral extremality and rigidity theorem for a large class of hyperbolic manifolds. As a special case, we obtain   a dihedral extremality and rigidity theorem for all parabolic prisms in hyperbolic spaces. In Section  \ref{sec:directprod}, we prove a dihedral rigidity theorem for  direct products of flat spaces and certain non-negatively curved spaces.

\section{Dihedral rigidity for radially convex domains of warped product manifolds}\label{sec:radconvexrigidity}

In this section, we prove Theorem \ref{thm:rigidityOfWarpedProductintro}: a dihedral extremality and rigidity theorem for  radially convex domains (cf. Definition \ref{def:radiallyConvex}) in warped product manifolds. 

\subsection{Manifolds with polyhedral boundary}

In this subsection, we briefly review the definition of \emph{manifolds with polyhedral  boundary}, which are locally modeled on $n$-dimensional polyhedra in $\mathbb R^n$. For a given Hausdorff space $X$, a polytope chart $(U, \varphi)$  for $X$ is a homeomorphism $\varphi$ from an open subset $U$ of $M$ to an open subset of an $n$-dimensional polyhedron in $\mathbb R^n$.  Two polytope charts $(U_1, \varphi_1)$ and $(U_2, \varphi_2)$ are $C^\infty$-related if either $U_1\cap U_2$ is empty or the map 
\[ 	\varphi_2\circ \varphi_1^{-1}\colon \varphi_1(U_1\cap U_2) \to \varphi_2(U_1\cap U_2) \]
is a diffeomorphism (of open subsets of $n$-dimensional polyhedra). Again, a system of pairwise $C^\infty$-related charts of $X$ that covers $X$ is called an atlas of $X$. 
\begin{definition}\label{def:polytopeboundary}
	A smooth manifold with polyhedral  boundary is a Hausdorff space equipped with a maximal atlas of polytope charts. 
\end{definition}

A Riemannian manifold with polyhedral boundary is a smooth manifold with polyhedral boundary equipped with a smooth Riemannian metric.  

\begin{definition}
	Let $N$ be an $n$-dimensional manifold with polyhedral boundary. We define the codimension $k$ stratum of $N$ to be the set of interior points of all codimension $k$ faces of $N$. 
\end{definition}

For each point $x$ in the codimension $k$ stratum of $N$, it admits a small neighborhood $U$ of the form: 
\[ \R^{n-k} \times P\]
such that $P$ is a polyhedral corner in $\R^k$ enclosed by hyperplanes passing through the origin of $\R^k$ and $x$ is the origin of $\R^n$.  In this case, we call the partial derivatives along $\mathbb R^{n-k}$ the base directions of the neighborhood $U$ of $x$.

\begin{definition}\label{def:polytopeMap} A  map $f\colon (N,\overbar g)\to (M,g)$ between Riemannian manifolds with polyhedral boundary is called a \emph{polytope map} if 
	\begin{enumerate}
		\item 	$f$ is Lipschitz,\footnote{Here $f\colon (N,\overbar g)\to (M,g)$ is said to be Lipschitz if there exists $C>0$ such that $d_M(f(x), f(y))\leq C\cdot d_N(x, y)$  for all points in $x, y\in N$.}
		\item $f$ is smooth away from the codimension three faces of $N$,
		\item $f$ maps the codimension $k$ stratum of $N$ to the  codimension $k$ stratum of $M$, and 
		\item every point $x$ in $N$ has a small open neighborhood $U$ such that $f$ is smooth with respect to the base directions on $U$.  
	\end{enumerate}
\end{definition}

\subsection{Comparisons on scalar curvature and mean curvature} \label{subsec:2.1}
In this subsection, we prove some key estimates for manifolds with polyhedral boundary that will be used in the proofs of our main theorems.

\begin{setup}\label{setup:warpedProduct}
	Let $(X, h)$ be an $(n-1)$-dimensional Riemannian manifold and $X\times I$  equipped with a warped product metric $g$ of the following form: 
	\begin{equation}\label{eq:warp}
	g = dr^2 + \varphi^2 h
	\end{equation}
	where $r$ is the parameter for the interval $I$ and $\varphi$ is a positive smooth function on $I$. Furthermore, $\varphi'\geq 0$ and $\varphi$ is log-concave.
	Let $M$ be a radially convex domain of $(X\times I, g)$ in the sense of Definition \ref{def:radiallyConvex}. 
	Suppose $(N, \overbar g)$ is a Riemannian manifold with polyhedral boundary and $f\colon (N, \overbar g) \to (M, g)$ is a spin polytope map.
\end{setup}

 Let $(M,g)$ be a region in $(X\times I,dr^2+\varphi^2 h)$. Note that the warped product metric is conformally a product metric after a reparametrization. More precisely, if we consider a smooth function $\rho$ such that
$$\rho'=\varphi(\rho),$$
then we have
$$dr^2+\varphi^2 h=\varphi(\rho(s))^2\big(ds^2+h\big),$$
where $s=\rho^{-1}(r)$. Let $A$ and $\widetilde A$ be the second fundamental forms of $\partial M$ with respect to the warped product metric $g=dr^2+\varphi^2 h$, and the conformal metric $\varphi^{-2}g=ds^2+h$, respectively. Direct computation shows that
$$\widetilde A(X,Y)=A(X,Y)-\frac{\varphi'}{\varphi}\langle\nu,\partial_r\rangle\langle X,Y\rangle_g$$
for any boundary tangent vectors $X,Y$. Therefore, we arrive at an equivalent characterization of the condition in Definition \ref{def:radiallyConvex}.
\begin{lemma}
	Let $(M,g)$ be a region in $(X\times I,dr^2+\varphi^2 h)$. The following are equivalent.
	\begin{enumerate}
		\item $M$ is radially convex.
		\item $(M,\varphi^{-2}g)$ has non-negative second fundamental form.
	\end{enumerate}
\end{lemma}

 Let $S_N$ (resp. $S_M$) be the local spinor bundle over $N$ (resp. $M$). In general, the bundles $S_N$ and $S_M$ only exist locally. However, since $f\colon N \to M$ is a spin  map, the bundle $TN\oplus f^\ast TM$ admits a spin structure. Let us denote by $S_N\otimes f^\ast S_M \coloneqq S_{TN\oplus f^\ast TM}$ the associated spinor bundle over $N$. We denote by $\ncon$ (resp. $\mcon$)  the Levi--Civita connection on $TN$ (resp. $TM$), and by $\nabla^{S_N}$ (resp. $\nabla^{S_M}$)  the associated connection on $S_N$ (resp. $S_M$). 
 We denote by $D$ the Dirac operator on $S_N\otimes f^*S_M$ over $N$ associated to the connection 
 $$\nabla\coloneqq \nabla^{S_N}\otimes 1+1\otimes f^*(\nabla^{S_M}).$$ Throughout the paper,  we will denote the Clifford multiplication of a vector $\overbar v\in TN$ by $\overbar c(\overbar v)$ and the Clifford multiplication of a vector $v\in  f^\ast TM$ by $c(v)$. 
 
Now let us introduce the following new connection on $S_M$:
\begin{equation}\label{eq:nablaHat}
\tcon^{S_M}_{e_i} =  \nabla_{e_i}^{S_M} + \frac{1}{2} c(\prescript{M}{}\nabla_{e_i} \partial_r) c(\partial_r) 
\end{equation}
 It is easy to verify that  
\[ \tcon_{e_i}^{S_M} c(\partial_r) = c(\partial_r) \tcon_{e_i}^{S_M}  \]
for all $e_i \in TM$. 

We have the following new connection 
\begin{equation}\label{eq:hatcon}
\tcon\coloneqq \nabla^{S_N}\otimes 1+1\otimes f^*(\tcon^{S_M})
\end{equation}
on the bundle $S_N\otimes f^*S_M$ over $N$. Let $\tdirac$ be the Dirac operator on $S_N\otimes f^*S_M$ associated to the new connection $\tcon$, that is,
\[ \tdirac\coloneqq \sum_{i} \overbar c(\overbar e_i) \tcon_{\overbar e_i}.\]

Let $\mathcal P\colon C^\infty(N,S_N\otimes f^*S_M)\to C^\infty(N,T^*N\otimes S_N\otimes f^*S_M)$ be the Penrose operator defined by
\begin{equation}\label{eq:penrosedef}
\mathcal P_{\overbar e_i}\sigma\coloneqq \tcon_{\overbar e_i}\sigma+\frac{1}{n}\overbar c(\overbar e_i)\tdirac\sigma
\end{equation}
for all $\overbar e_i\in TN$ and all $\sigma\in C^\infty(N,S_N\otimes f^*S_M)$.
We have the following useful identity (cf. \cite[Section 5.2]{spinorialapproach}):
\begin{equation}\label{eq:penrose}
|\tcon\sigma|^2=|\mathcal P\sigma|^2+\frac 1 n|\tdirac \sigma|
\end{equation}
all $\sigma\in C^\infty(N,S_N\otimes f^*S_M)$.

Let $\overbar\epsilon$ and $\epsilon$ be the grading operator on $S_N$ and $S_M$ respectively. Then the operator $\overbar\epsilon\otimes\epsilon$ defines a $\Z_2$-grading on $S_N\otimes f^*S_M$. The operator that is most relevant to our current geometric setup is the following Dirac operator (with a potential) acting on  $S_N\otimes f^*S_M$ over $N$: 
\begin{equation}\label{eq:Diracpotential}
\tdirac_\Psi\coloneqq \tdirac + \Psi.
\end{equation}
Here the potential $\Psi$ is given by
\begin{eqnarray}\label{eq:potential}
\Psi\coloneqq \frac{n}{2}f^*(\frac{\varphi'}{\varphi})\cdot (\overbar\epsilon\otimes\epsilon)\cdot (1
\otimes  c(\partial_r)),
\end{eqnarray}
where  $\varphi$ is the warping function appearing in the warped product metric from line \eqref{eq:warp} and $n = \dim M$.

We further fix some notations on the boundaries. For each codimension one face $\overbar F$ (resp. $F$) of $N$ (resp. $M$), let $\overbar e_n$ (resp. $e_n$) be its unit inner normal vector field. Let us define the boundary Clifford actions by setting 
\begin{equation*}
\overbar c_\partial(\overbar e_\lambda)\coloneqq\overbar c(\overbar e_n)\overbar c(\overbar e_\lambda) \textup{ and } c_\partial( e_\lambda)\coloneqq c( e_n) c( e_\lambda)
\end{equation*}
for $\overbar e_\lambda \in T\overbar F$ and $e_\lambda \in TF$. Similarly to line \eqref{eq:nablaHat}, we define the following boundary connections 
\begin{equation}\label{eq:nablaSNBoundary}
\nabla^{S_N,\partial}_{\overbar e_j}=\nabla^{S_N}_{\overbar e_j}+\frac 1 2 \overbar c(\prescript{N}{}\nabla_{\overbar e_j} \overbar e_n)\overbar c(\overbar e_n),
\end{equation}
\begin{equation}\label{eq:nablaSMBoundary}
\nabla^{S_M,\partial}_{e_j}=\nabla^{S_M}_{e_j}+\frac 1 2 c(\prescript{M}{}\nabla_{e_j}  e_n)c( e_n),
\end{equation}
and
\begin{equation}\label{eq:boundaryconnection}
\nabla^\partial=\nabla^{S_N,\partial}\otimes 1+1\otimes f^*(\nabla^{S_M,\partial}).
\end{equation}
In particular, a direct computation shows that the Clifford multiplication $\overbar c(\overbar e_n)$ (resp. $c(e_n)$ ) is parallel with respect to $\nabla^{S_M,\partial}$ (resp. $\nabla^{S_N,\partial}$). We define  the boundary Dirac operator $D^\partial$  to be 
\begin{equation}\label{eq:boundarydirac}
D^{\partial}\coloneqq \sum_{j=1}^{n-1}\overbar c_\partial(\overbar e_j)\nabla^{\partial}_{\overbar e_j}.
\end{equation}


\begin{definition}\label{def:boundarycondition}
	A section $\sigma$ of $S_N\otimes f^* S_M$ over $N$ is said to satisfy the local boundary condition $B$  if $\sigma|_{\overbar F_i}$ lies in $\ker(1+(\overbar \epsilon\otimes\epsilon)(\overbar c(\overbar e_n)\otimes c(e_n)))$ on every codimension one face $\overbar F_i$ of $N$, where $\overbar \epsilon$ and $\epsilon$ are the grading operators on $S_N$ and $f^\ast S_M$.
\end{definition}

Now let us prove the following key estimate on comparisons of scalar curvature and  mean curvature. 
\begin{proposition}\label{prop:comparison}
	Assume  Geometric setup \ref{setup:warpedProduct} and in particular $M$ is a radially convex domain of $(X\times I, g)$. If the curvature operator of $(X, h)$ is non-negative and $f\colon (N, \overbar g) \to (M, g)$ is distance non-increasing, then  we have 
	\begin{equation}\label{eq:comparison}
	\begin{aligned}
	\int_N|\tdirac_\Psi\sigma|^2\geq& \int_N |\mathcal P\sigma|^2 + \frac{n}{4(n-1)}\int_N(\Sc(\overbar g)-f^*\Sc(g))|\sigma|^2\\
	&+\frac{n}{2(n-1)}\sum_{i}\int_{\partial \overbar F_i}(H_{\overbar g}(\overbar F_i)-f^*H_g(F_i))|\sigma|^2
	\end{aligned}\end{equation}
	for every smooth section  $\sigma$ of $S_N\otimes f^* S_M$ that satisfies the boundary condition $B$ in Definition \ref{def:boundarycondition}, and vanishes near all codimension $2$ singularities.
\end{proposition}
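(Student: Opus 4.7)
The plan is to derive the inequality as an integrated Schrödinger--Lichnerowicz--Weitzenböck identity for the perturbed Dirac operator $\tdirac_\Psi$, combined with the Penrose identity \eqref{eq:penrose} and a face-wise analysis of boundary contributions under the local condition $B$. First, I would expand
\[
\tdirac_\Psi^2 = \tdirac^2 + \{\tdirac,\Psi\} + \Psi^2 = \tcon^*\tcon + \bigl(\widehat{\mathcal R} + \{\tdirac,\Psi\} + \Psi^2\bigr),
\]
where $\widehat{\mathcal R}$ is the curvature endomorphism of $\tcon$. Pairing with $\sigma$, integrating over $N$, substituting the Penrose identity $|\tcon\sigma|^2 = |\mathcal P\sigma|^2 + \tfrac{1}{n}|\tdirac\sigma|^2$, and converting $|\tdirac\sigma|^2$ into $|\tdirac_\Psi\sigma|^2$ via $|\tdirac|^2 = |\tdirac_\Psi - \Psi|^2$ (with integration by parts on the cross term), one rearranges to an identity of the form
\[
\tfrac{n-1}{n}\int_N |\tdirac_\Psi \sigma|^2 = \int_N |\mathcal P\sigma|^2 + \int_N \bigl\langle \bigl(\widehat{\mathcal R} + \tfrac{n-1}{n}\{\tdirac,\Psi\} + \tfrac{n-1}{n}\Psi^2\bigr)\sigma, \sigma\bigr\rangle + (\text{boundary}).
\]
The assumption that $\sigma$ vanishes near codimension-two corners legitimizes all face-wise integrations by parts, and the zeroth-order nature of $\{\tdirac,\Psi\}$ follows from $\Psi$ anticommuting with each $\overbar c(\overbar e_i)$.

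The main obstacle is the bulk algebraic matching. Using the warped-product identity $\mcon_e\partial_r = (\varphi'/\varphi)\,e$ for $e$ tangent to a leaf, the curvature of $\tcon^{S_M}$ on the $f^*S_M$ factor decomposes into a piece given by $\varphi^{-2}$ times the pull-back of the curvature of $(X,h)$ (acting through pulled-back frames) plus explicit warping correction terms. The leaf-curvature contribution $\tfrac14\sum R^X_{ijkl}\, c(df(\overbar e_i))c(df(\overbar e_j))c(e_k)c(e_l)$ is bounded below by $-\tfrac{1}{4\varphi^2} f^*\Sc(h)\cdot|\sigma|^2$ via the standard Llarull-type algebraic inequality, using $\|df\|\leq 1$ and the non-negativity of the curvature operator of $(X,h)$. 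The coefficient $\tfrac{n}{2}(\varphi'/\varphi)$ in \eqref{eq:potential} is designed precisely so that these warping correction terms, together with $\tfrac{n-1}{n}|\Psi|^2$ and $\tfrac{n-1}{n}\{\tdirac,\Psi\}$, reproduce the remaining pieces of the warped-product scalar curvature formula
\[
\Sc(g) = \tfrac{\Sc(h)}{\varphi^2} - 2(n-1)\tfrac{\varphi''}{\varphi} - (n-1)(n-2)\bigl(\tfrac{\varphi'}{\varphi}\bigr)^{\!2}.
\]
The log-concavity $(\log\varphi)''\leq 0$ ensures that the residual term proportional to $f^*(\log\varphi)''\langle df(\cdot),\partial_r\rangle$ has a favorable sign and can be discarded. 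Multiplying the resulting bulk inequality by $\tfrac{n}{n-1}$ yields the advertised coefficient $\tfrac{n}{4(n-1)}$ on the scalar-curvature difference.

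For the boundary analysis, on each codimension-one face $\overbar F_i$ I would decompose $\overbar c(\overbar e_n)\tdirac\sigma$ using the splitting $\tdirac = \overbar c(\overbar e_n)\tcon_{\overbar e_n} + \sum_{j<n}\overbar c(\overbar e_j)\tcon_{\overbar e_j}$ together with the relation between $\tcon$ and the boundary connection $\nabla^\partial$ from \eqref{eq:nablaSNBoundary}--\eqref{eq:boundaryconnection}. Under the local condition $B$, the operator $(\overbar\epsilon\otimes\epsilon)(\overbar c(\overbar e_n)\otimes c(e_n))$ acts as $-1$ on $\sigma|_{\overbar F_i}$, so the $D^\partial$-piece is self-adjoint and vanishes against $\sigma$, while the contractions of the second fundamental forms of $\overbar F_i$ and $F_i$ contribute exactly $\tfrac12\bigl(H_{\overbar g}(\overbar F_i) - f^*H_g(F_i)\bigr)|\sigma|^2$, following the mechanism of \cite[Section 3]{Wang:2021tq}. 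The extra boundary term coming from the $\Psi$-integration by parts is proportional to $(\varphi'/\varphi)\bigl\langle(\overbar\epsilon\otimes\epsilon)\overbar c(\overbar e_n)c(\partial_r)\sigma,\sigma\bigr\rangle$: on a type-(2) face $e_n = \pm\partial_r$, and $B$ forces this pairing to account exactly for the leaf-face contribution $f^*H_g(F_i) = \mp(n-1)\varphi'/\varphi$, while on a type-(1) face the angle condition $\angle(e_n,\partial_r)\leq\pi/2$ combined with convexity of $F_i$ and $B$ force the residual to be non-negative and can be discarded. Collecting all contributions and rescaling by $\tfrac{n}{n-1}$ yields \eqref{eq:comparison}.
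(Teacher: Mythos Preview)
Your proposal is correct and follows essentially the same route as the paper: expand $|\tdirac_\Psi\sigma|^2$, apply the Lichnerowicz formula for $\tdirac$ together with the Penrose identity \eqref{eq:penrose} to obtain the identity \eqref{eq:DPsiSquare}, then bound the bulk term using the Llarull-type curvature estimate (Lemma~\ref{lemma:curvature>=}), the warped-product scalar curvature formula \eqref{eq:scalarcurvatureofwarped}, and log-concavity for the $\{\tdirac,\Psi\}$ contribution, and finally handle the boundary face by face exactly as you describe (the $D^\partial$-term vanishes under $B$, $\mathcal A$ is controlled by Lemma~\ref{lemma:secondff>=}, and the $\Psi$-boundary piece combines with the $\tcon$--$\nabla^\partial$ correction via \eqref{eq:sintheta} and \eqref{eq:cenPsi}). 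One small clarification: the point of passing to $\tcon^{S_M}$ is precisely that the Lichnerowicz curvature term $\mathcal R$ sees \emph{only} the leaf-wise curvature $\widehat R$ (since $\partial_r$ is $\tcon$-parallel, cf.\ \eqref{eq:BLW}), so there are no residual ``warping correction terms'' in $\mathcal R$ itself---the warping pieces of $\Sc(g)$ come entirely from $\Psi^2$ and $\{\tdirac,\Psi\}$.
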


\begin{proof}
	Without loss of generality, we assume that both $N$ and $M$ are even-dimensional. The computation for the odd-dimensional case\footnote{Alternatively, for applications in later sections, we can reduce the odd dimensional case to the even dimensional case by taking  direct product with the unit interval.} is completely similar.

	Note that $\overbar\epsilon\otimes\epsilon c(\partial_r)$ is self-adjoint and anti-commutes with $\tdirac$, and its square is the identity operator. Let us write for short
	\begin{equation*}
	\Psi=f^\ast \psi\cdot (\overbar\epsilon\otimes\epsilon)\cdot (1
	\otimes  c(\partial_r)), \textup{ where } \psi= \frac{n\varphi'}{2\varphi}.
	\end{equation*}
	By the definition of $\tdirac_\Psi$ from line \eqref{eq:Diracpotential}, we have the following pointwise equality
	\begin{equation}\label{eq:Bsquare}
	\langle \tdirac_\Psi\sigma, \tdirac_\Psi \sigma\rangle
	=|\tdirac\sigma|^2+\langle \Psi\sigma,\tdirac\sigma\rangle+\langle\tdirac\sigma, \Psi\sigma\rangle+(f^\ast\psi)^2 |\sigma|^2.
	\end{equation}
	By the Stokes theorem, we have
	\begin{equation}\label{eq:StokesDsquare}
	\int_{N} |\tdirac\sigma|^2=\int_{N} \langle\tdirac^2\sigma,\sigma\rangle+\int_{\partial N} \langle\overbar c(\overbar e_n) \tdirac\sigma,\sigma\rangle.
	\end{equation}
Note that 
	\begin{equation}\label{eq:Lichne}
	\tdirac^2=\tcon^*\tcon+\mathcal R, 
	\end{equation}
where $\mathcal R$ is the corresponding  curvature endomorphism of $S_N\otimes f^*S_M$. 
	Again by the Stokes theorem, we have
	\begin{equation}\label{eq:Stokesnabla}
	\int_{N}\langle \tcon^*\tcon\sigma,\sigma\rangle=\int_{N}| \tcon \sigma|^2 +\int_{\partial N}\langle\tcon_{\overbar e_n}\sigma,\sigma\rangle,
	\end{equation} 
	where $\overbar e_n$ is the inner unit normal vector of $\partial N$.
	
	By combining line \eqref{eq:StokesDsquare}, \eqref{eq:Lichne}, \eqref{eq:Stokesnabla} and \eqref{eq:penrose}, we obtain that
	\begin{equation}\label{eq:D^2}
	\begin{split}
	\int_{N} |\tdirac\sigma|^2=&\frac{n}{n-1}\int_{N} |\mathcal P\sigma|^2+\frac{n}{n-1}\int_{N}\langle\mathcal R\sigma,\sigma\rangle\\
	&+\frac{n}{n-1}\int_{\partial N}\langle (\overbar c(\overbar e_n)\tdirac+\tcon_{\overbar e_n})\sigma,\sigma\rangle
	\end{split}
	\end{equation}
	By the Stokes theorem, the second and third terms on the right hand side of the equation from  line \eqref{eq:Bsquare} give 
	\begin{equation}\label{eq:2and3}
	\begin{split}
	&\int_{N} \langle \Psi \sigma,\tdirac\sigma\rangle+\langle\tdirac\sigma,\Psi\sigma\rangle\\
	=&\int_{N} \langle\tdirac \Psi \sigma,\sigma\rangle+
	\langle\Psi \tdirac\sigma,\sigma\rangle+
	\int_{\partial N} \langle\overbar c(\overbar e_n) \Psi \sigma,\sigma\rangle\\
	=&\int_{N} \langle [\tdirac,\Psi] \sigma,\sigma\rangle
	+\int_{\partial N} \langle\overbar c(\overbar e_n) \Psi\sigma,\sigma\rangle.
	\end{split}
	\end{equation}
	Note that $[\tdirac,\Psi]=\overbar c(\grad \psi) \cdot (\overbar\epsilon\otimes\epsilon)\cdot (1
	\otimes  c(\partial_r)) $, where $\grad \psi$ is the gradient of $\psi$. 
	
	To summarize, we have
	\begin{equation}\label{eq:DPsiSquare}
	\begin{split}
	\int_{N} |\widehat D_\Psi\sigma|^2=&\frac{n}{n-1}\int_{N} |\mathcal P\sigma|^2\\
	+&\int_{N}\frac{n}{n-1}\langle\mathcal R\sigma,\sigma\rangle
	+ \langle \overbar c(\grad(f^\ast\psi)) \cdot (\overbar\epsilon\otimes\epsilon c(\partial_r)) \sigma,\sigma\rangle+(f^\ast\psi)^2|\sigma|^2\\
	&+\int_{\partial N}\frac{n}{n-1}\langle (\overbar c(\overbar e_n)\tdirac+\tcon_{\overbar e_n})\sigma,\sigma\rangle+\langle\overbar c(\overbar e_n) \Psi\sigma,\sigma\rangle
	\end{split}
	\end{equation}
	where $\grad(f^\ast \psi)$ is the gradient of $f^\ast \psi$. 
	We first evaluate the curvature term in the second line of Equation \eqref{eq:DPsiSquare}. Write $\overbar r=f^* r$, which is a $1$-Lipschitz function over $N$. Hence we have
	\begin{align*}
	[\tdirac,\Psi]=&\overbar c(\grad(f^\ast \psi)) \cdot (\overbar\epsilon\otimes\epsilon)\cdot (1
	\otimes  c(\partial_r))\\
	=&  -f^\ast\Big(\frac{d\psi}{dr}\Big)\cdot 
	(\overbar\epsilon\otimes\epsilon) (\overbar c(\grad\overbar r))\otimes c(\partial_r)).
	\end{align*}
	Since $\psi'\leq 0$, we obtain that
	\begin{equation}\label{eq:gradpsi}
	\langle[\tdirac,\Psi]\sigma,\sigma\rangle\geq f^\ast\Big(\frac{d\psi}{dr}\Big) |\sigma|^2
	=f^\ast\Big(\big(n\varphi'/2\varphi\big)'\Big)\cdot |\sigma|^2
	\end{equation}
For the warped product metric $g = dr^2 + \varphi^2 h$ on $X\times I$, its scalar curvature is given in terms of the scalar curvature of $h$ as follows:  
	\begin{equation}\label{eq:warpscalar}
	\Sc(g)=\frac{\Sc(h)}{\varphi^2}-\frac{(n-1)(n-2)(\varphi')^2}{\varphi^2}-\frac{2(n-1)\varphi''}{\varphi},
	\end{equation}
	which is equivalent to the following
	\begin{equation}\label{eq:scalarcurvatureofwarped}
	\frac{n}{n-1}\cdot \frac{\Sc(g)}{4}=\frac{n}{n-1}\cdot \frac{\Sc(h)}{4\varphi^2}-\Big(\frac{n\varphi'}{2\varphi}\Big)^2
	-\frac{d}{dr}\Big(\frac{n\varphi'}{2\varphi}\Big).
	\end{equation}
	The above discussion together with Lemma \ref{lemma:curvature>=} below implies that
	\begin{equation}\label{eq:Sc-Sc}
	\begin{split}
	&\frac{n}{n-1}\langle\mathcal R\sigma,\sigma\rangle
	+(f^\ast \psi)^2|\sigma|^2+ \langle [\tdirac,\Psi] \sigma,\sigma\rangle\\
	\geq& \frac{n}{4(n-1)}(\Sc(\overbar g)-f^*\Sc(g))|\sigma|^2.
	\end{split}
	\end{equation}
	
	Now we consider the boundary term in the third line of Equation \eqref{eq:DPsiSquare} on each face, that is, 
	\begin{equation}\label{eq:boundary}
		\frac{n}{n-1}\langle (\overbar c(\overbar e_n)\tdirac+\tcon_{\overbar e_n})\sigma,\sigma\rangle+\langle\overbar c(\overbar e_n) \Psi\sigma,\sigma\rangle.
	\end{equation}
	
	Let $\partial_r$ be the gradient of the function $r$. Denote  $\alpha=\langle e_n,\partial_r\rangle$. We then have an orthogonal decomposition
	$$\partial_r=\alpha e_n+\beta,$$
	where $\beta$ lies in the tangent space of the face. For brevity, let us write
	\begin{equation*}
		\gamma\coloneqq (\overbar\epsilon\otimes\epsilon)(\overbar c(\overbar e_n)\otimes c(e_n)).
	\end{equation*}
	Recall that $\sigma$ satisfies the boundary condition $B$, that is,  $\sigma=-\gamma\sigma$.
	By definition, the second term of line \eqref{eq:boundary} is given by
	\begin{align*}
		\langle\overbar c(\overbar e_n) \Psi\sigma,\sigma\rangle=&\alpha f^*\psi\langle\overbar c(\overbar e_n) (\overbar\epsilon\otimes\epsilon)(1\otimes c(e_n))\sigma,\sigma\rangle+f^*\psi\langle\overbar c(\overbar e_n) (\overbar\epsilon\otimes\epsilon)(1\otimes \beta)\sigma,\sigma\rangle\\
		=&\alpha f^*\psi|\sigma|^2,
	\end{align*}
	where the last equality follows from the boundary condition, and the fact that $\gamma$ anti-commutes with $c(\overbar e_n) (\overbar\epsilon\otimes\epsilon)(1\otimes \beta)$ as $\beta$ is orthogonal to $e_n$.
	
	Now we consider the first term in line \eqref{eq:boundary}.	
	Let $\mathcal A$ be the boundary curvature endomorphism, that is,
	\begin{equation}\label{eq:mathcalA}
	\mathcal A\coloneqq D-D^\partial
	\end{equation}
	where $D^\partial$ is given as in line \eqref{eq:boundarydirac}. The endomorphism $\mathcal A$ is given by
		\begin{equation}\label{eq:boundaryA}
		\mathcal A=\frac{H(\overbar g)}{2}-\frac{1}{2}\sum_{\lambda,\mu}A(f_*\overbar e_\lambda, e_\mu)\overbar c_\partial(\overbar e_\lambda)\otimes c_\partial(e_\mu).
	\end{equation}	
	By definition of $\tcon$ from line \eqref{eq:hatcon}, we have
	\begin{equation}\label{eq:cenD+nablaen}
	\begin{split}
	&\overbar c(\overbar e_n)\tdirac+\tcon_{\overbar e_n}=
	\overbar c(\overbar e_n)\sum_\lambda \overbar c(\overbar e_\lambda)\tcon_{\overbar e_\lambda}\\
	=&D^\partial+\mathcal A+\frac 1 2\sum_\lambda \overbar c(\overbar e_n)\overbar c(\overbar e_\lambda)\otimes c(\mcon_{f_*\overbar e_\lambda} \partial_r)c(\partial_r)
	\end{split}
	\end{equation}
A direct computation shows that  $\langle D^\partial\sigma,\sigma\rangle=0$ if $\sigma$ satisfies the boundary condition $B$ (cf. \cite[Lemma 3.3]{Wang:2021tq}).

We denote
$$c(f_*\overbar e_\lambda\wedge\partial_r)\coloneqq\frac 1 2(f_*\overbar c(e_\lambda)c(\partial_r)-c(\partial_r)c(f_*\overbar e_\lambda).$$
A direct computation shows that the last term in line \eqref{eq:cenD+nablaen} applied on section $\sigma$ is given by
 \begin{align*}
 	&\frac 1 2\langle\sum_\lambda \overbar c(\overbar e_n)\overbar c(\overbar e_\lambda)\otimes c(\mcon_{f_*\overbar e_\lambda} \partial_r)c(\partial_r)\rangle\\
= &\frac 1 2f^*(\frac{\varphi'}{\varphi})\langle\sum_\lambda \overbar c(\overbar e_n)\overbar c(\overbar e_\lambda)\otimes c(f_*\overbar e_\lambda\wedge\partial_r)\sigma,\sigma\rangle\\
=&\frac 1 2f^*(\frac{\varphi'}{\varphi})\cdot\alpha\langle\sum_\lambda \overbar c(\overbar e_n)\overbar c(\overbar e_\lambda)\otimes c(f_*\overbar e_\lambda\wedge e_n)\sigma,\sigma\rangle+\frac 1 2f^*(\frac{\varphi'}{\varphi})\langle\sum_\lambda \overbar c(\overbar e_n)\overbar c(\overbar e_\lambda)\otimes c(f_*\overbar e_\lambda\wedge\beta)\sigma,\sigma\rangle\\
=&-\frac 1 2f^*(\frac{\varphi'}{\varphi})\cdot\alpha\langle\sum_\lambda \overbar c_\partial(\overbar e_\lambda)\otimes c_\partial(f_*\overbar e_\lambda)\sigma,\sigma\rangle
 \end{align*}
where the last equality follows from the boundary condition $\gamma\sigma=-\sigma$, and that $f$ sends $\partial N$ to $\partial M$.

To summarize, we have shown that line \eqref{eq:boundary} is now given by
	\begin{equation}\label{eq:boundary2}
		\frac{n}{n-1}\langle (\overbar c(\overbar e_n)\tdirac+\tcon_{\overbar e_n})\sigma,\sigma\rangle+\langle\overbar c(\overbar e_n) \Psi\sigma,\sigma\rangle=\frac{n}{n-1}\langle\widehat{\mathcal A}\sigma,\sigma\rangle,
\end{equation}
where
\begin{equation}\label{eq:hatA}
	\begin{split}
		\widehat{\mathcal A}=\frac{H(\overbar g)}{2}-\frac{1}{2}\sum_{\lambda,\mu}\Big(A(f_*\overbar e_\lambda, e_\mu)+\alpha f^*\psi\Big)\overbar c_\partial(\overbar e_\lambda)\otimes c_\partial(e_\mu)+\frac{n-1}{2}\alpha f^*(\frac{\varphi'}{\varphi}).
	\end{split}
\end{equation}
By Lemma \ref{lemma:secondff>=}, we obtain from line \eqref{eq:DPsiSquare} the following inequality 
\begin{equation*}
	\begin{aligned}
		\int_{N} |\widehat D_\Psi\sigma|^2 \geq &\frac{n}{n-1}\int_{N} |\mathcal P\sigma|^2  + \frac{n}{4(n-1)}\int_{N} (\Sc(\overbar g)-f^*\Sc(g))|\sigma|^2
		\\
		&+\frac{n}{2(n-1)}\int_{\partial N}(H(\overbar g) -f^*H(g))|\sigma|^2 
\end{aligned}
\end{equation*}  
This finishes the proof.
\end{proof}

Now let us prove the estimates on scalar curvature and mean curvature that were used in the proof of Proposition \ref{prop:comparison} above. 
\begin{lemma}\label{lemma:curvature>=}
	If the curvature operator on $(X, h)$ is non-negative, then
	\begin{equation}\label{eq:curvature>=}
	\mathcal R\geq \frac{\Sc(\overbar g)}{4}-f^*\Big(\frac{\Sc(h)}{4\varphi^2}\Big),
	\end{equation}
where $\mathcal R$ is the curvature term from line \eqref{eq:Lichne}.
\end{lemma}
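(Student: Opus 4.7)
The plan is to invoke a twisted Lichnerowicz-type formula for $\tdirac$ and then identify the surviving part of the curvature endomorphism with the intrinsic scalar curvature of the leaf $(X,h)$ through a Llarull-type Clifford algebra estimate. Since $\tcon = \nabla^{S_N}\otimes 1 + 1\otimes f^{\ast}(\tcon^{S_M})$, the standard Lichnerowicz identity for the spin connection $\nabla^{S_N}$ on $S_N$ contributes the intrinsic term, and the remainder is a pulled-back twisting curvature term. Concretely, I would write
\[
\mathcal R = \frac{\Sc(\overbar g)}{4} + \mathcal R^{E}, \qquad \mathcal R^{E} = \frac{1}{2}\sum_{i,j}\overbar c(\overbar e_i)\overbar c(\overbar e_j)\otimes R^{\tcon^{S_M}}(f_{\ast}\overbar e_i, f_{\ast}\overbar e_j),
\]
so that the asserted inequality reduces to showing $\mathcal R^{E} \geq -f^{\ast}\bigl(\Sc(h)/(4\varphi^{2})\bigr)$.

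Next, I would compute the curvature of the modified connection $\tcon^{S_M}$ on $S_M$. The defining feature of $\tcon^{S_M}$, from line \eqref{eq:nablaHat}, is that $c(\partial_r)$ is parallel; in particular $R^{\tcon^{S_M}}(V,W)$ commutes with $c(\partial_r)$. Writing $\omega = \tfrac{1}{2}c(\prescript{M}{}\nabla_{\cdot}\partial_r)c(\partial_r)$, I would expand $R^{\tcon^{S_M}} = R^{\nabla^{S_M}} + d\omega + \omega\wedge\omega$ and, using the warped-product Koszul identities $\prescript{M}{}\nabla_{\partial_r}\partial_r = 0$ and $\prescript{M}{}\nabla_{V}\partial_r = (\varphi'/\varphi)V^{H}$ for leaf-tangent $V^H$ (cf. line \eqref{eq:nablaM}), together with the Clifford relations $c(\partial_r)^2=-1$ and $c(X)c(\partial_r)=-c(\partial_r)c(X)$ for $X\perp \partial_r$, show that the $\omega\wedge\omega$ and $d\omega$ terms precisely cancel the radial Riemannian curvature contributions, i.e.\ the $\varphi''/\varphi$ and $(\varphi'/\varphi)^{2}$ terms visible in the warped-product scalar-curvature formula \eqref{eq:warpscalar}. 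The outcome is that, as a Clifford endomorphism of $S_M$, $R^{\tcon^{S_M}}(V,W)$ depends only on the leaf Riemann tensor $R^{h}$ applied to the leaf-tangent projections $V^{H}, W^{H}$, rescaled by $\varphi$.

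Finally, I would apply the Llarull curvature-operator estimate. Since $f$ is distance non-increasing, for each $p\in N$ the vectors $\{\varphi^{-1}f_{\ast}\overbar e_i^{H}\}$ form a system of vectors in $T_{f(p)}X$ of norm $\leq 1$ (with respect to $h$). The non-negativity of the curvature operator of $(X,h)$ then allows the standard Goette–Semmelmann/Llarull Clifford algebra argument to convert the twisted sum into a lower bound by the scalar curvature of the leaf:
\[
\tfrac{1}{2}\sum_{i,j}\overbar c(\overbar e_i)\overbar c(\overbar e_j)\otimes R^{\tcon^{S_M}}(f_{\ast}\overbar e_i,f_{\ast}\overbar e_j) \;\geq\; -f^{\ast}\!\left(\frac{\Sc(h)}{4\varphi^{2}}\right).
\]
Combined with the first step this yields \eqref{eq:curvature>=}.

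The main obstacle is the middle step: verifying that the chosen potential correction $\tfrac{1}{2}c(\prescript{M}{}\nabla_{\cdot}\partial_r)c(\partial_r)$ eliminates \emph{exactly} the radial contributions to the warped-product curvature, leaving a clean leaf-only expression. This is a delicate interplay between the warped-product Koszul formulas and the Clifford relations, and it is precisely this balanced cancellation that motivates the specific form of $\tcon^{S_M}$ (following the Cecchini–Zeidler approach). Once that curvature identity is in hand, the final Llarull-type bound is standard.
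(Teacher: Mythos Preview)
Your proposal is correct and follows the same three-step strategy as the paper: split $\mathcal R$ via the twisted Lichnerowicz formula, identify the twisting curvature as purely leaf-wise, then run the Llarull/Goette--Semmelmann square-completion estimate using $\widehat R = L^{2}$. The only difference is cosmetic: where you propose an explicit $R^{\nabla^{S_M}} + d\omega + \omega\wedge\omega$ computation to cancel the radial $\varphi''/\varphi$ and $(\varphi'/\varphi)^{2}$ contributions, the paper dispatches this in one sentence by observing that $\partial_r$ is $\tcon$-parallel, so the Bochner--Lichnerowicz--Weitzenbock term already involves only the leaf-wise curvature form $\widehat R$ from the outset.
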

\begin{proof}
	For $2$-forms of $N$, we define the Clifford multiplication by
	\begin{equation}\label{eq:c(2-form)}
	\overbar c(\overbar e_i\wedge \overbar e_j)=\overbar c(\overbar e_i)\overbar c(\overbar e_j),
	\end{equation}
	where $\overbar e_i, \overbar e_j \in TN$ are orthogonal. The Clifford  multiplication $c(w)$ for a $2$-form $w$ over $M$ is defined similarly. 
	
	 By the Bochner--Lichnerowicz--Weitzenbock formula, we have
	\begin{equation}\label{eq:BLW}
	\mathcal R =\frac{\Sc(\overbar g)}{4}-\frac{1}{2}\sum_{i,j}\langle \widehat R f_*\overbar w_j,w_i\rangle_M \, \overbar c(\overbar w_j)\otimes c(w_i),
	\end{equation}
	where $\{\overbar w_j\}$ is a local orthonormal basis of $2$-forms on $N$, and $\{w_i\}$ is a local orthonormal basis of leaf-wise $2$-forms on $M$, and $\widehat R$ is the leaf-wise curvature form along each leaf $M\cap (X\times \{r\})$ of $M$. The reason that $\widehat R$ (instead of the usual curvature form of $M$) appears in the above formula is that $\partial_r$ is parallel with respect to the connection $\tcon$ in line \eqref{eq:hatcon}. 
	
	As the curvature operator $\widehat R$ is non-negative along each leaf, there exists a self-adjoint $L\in \Endo(\Bigwedge^2 TX)$  such that $\widehat R=L^2$, that is,
	$\langle \widehat R  w_j,w_i\rangle_{M}=\langle L  w_j,L w_i\rangle_{M}.$
	
	Set $$\overbar L w_k\coloneqq \sum_i\langle L w_k,f_*\overbar w_i\rangle_M\overbar w_i\in\Bigwedge^2 TN.$$
	The second term on the right hand side  of \eqref{eq:BLW} can be written as
	\begin{align*}
	&-\frac{1}{2}\sum_{i,j}\langle \widehat R f_*\overbar w_j,w_i\rangle_{M} \overbar c(\overbar w_j)\otimes c(w_i)\\
	=&-\frac{1}{2}\sum_{i,j,k}\langle L(f_*\overbar w_j),w_k\rangle_M \cdot 
	\langle Lw_i,w_k\rangle_M \cdot  \overbar c(\overbar w_j)\otimes c(w_i)\\
	=&-\frac{1}{2}\sum_{k}\overbar c(\overbar Lw_k)\otimes c(L w_k)\\
	=&\frac{1}{4}\sum_k\Big(\overbar c(\overbar Lw_k)^2\otimes 1+1\otimes c(L w_k)^2-\big(\overbar c(\overbar Lw_k)\otimes 1+1\otimes  c(L w_k)\big)^2\Big)\\
	\geq&\frac 1 4 \sum_k\overbar c(\overbar Lw_k)^2\otimes 1+\frac{1}{4}\sum_k 1\otimes c(L w_k)^2, 
	\end{align*}
	where the last inequality follows from the fact  that the element
	\[  \overbar c(\overbar Lw_k)\otimes 1+1\otimes  c(L w_k)\] is skew-symmetric, hence its square is non-positive.  
	
	
	The same proof for the Lichnerowicz formula (cf. \cite[Theorem II.8.8]{spingeometry}) shows that
	$$\sum_k  c(L w_k)^2=-  \frac{\Sc(\varphi^2h)}{2}=- \frac{\Sc(h)}{2\varphi^2}.$$
	Similarly, by the definition of $\overbar L$, we have
	\begin{align*}
	\sum_k\overbar c(\overbar Lw_k)^2 & =  \sum_{i,j,k}\langle \overbar Lw_k, f_*\overbar w_i\rangle_M \cdot 
	\langle \overbar Lw_k, f_*\overbar w_j\rangle_M \cdot \overbar c(\overbar w_i)\otimes c(\overbar w_j) \\
	& =\sum_{i,j}\langle \widehat R(f_*\overbar w_i),f_*(\overbar w_j)\rangle_M \cdot \overbar c(\overbar w_i)\overbar c(\overbar w_j).
	\end{align*}
	We choose a local $\overbar g$-orthonormal frame $\overbar e_1,\ldots,\overbar e_n$ of $TN$ and a local $g$-orthonormal frame $e_1,\ldots,e_n$ of $TM$ such that $f_*\overbar e_i=\mu_i e_i$ with $\mu_i\geq 0$. This can be done by diagonalizing $f^*g$ with respect to the metric $\overbar g$. Then we have 
	$f_*(\overbar e_i\wedge\overbar e_j)=\mu_i\mu_j e_i\wedge e_j$.
	As $f$ is distance-non-increasing, we have
	\begin{align*}
	\sum_k \overbar c(\overbar Lw_k)^2=- \sum_{i,j}\mu_i^2\mu_j^2(f^\ast \widehat R_{ijji})\geq -\frac{\Sc(\varphi^2h)}{2}=-\frac{\Sc(h)}{2\varphi^2}.
	\end{align*}
	This finishes the proof.
\end{proof}
\nocite{Lottboundary,Wang:2021tq}
\begin{lemma}\label{lemma:secondff>=}
	If the second fundamental form $A$ of $\partial M$ satisfies the inequality in Definition \ref{def:radiallyConvex}, then the operator $\widehat{\mathcal A}$ in line \eqref{eq:hatA} satisfies
	\begin{equation}\label{eq:H-H}
	\mathcal A\geq \frac{H(\overbar g)}{2}-\frac{f^*H(g)}{2}.
	\end{equation}
\end{lemma}
\begin{proof}
	The strategy is similar to that of Lemma \ref{lemma:curvature>=}. Denote
	$$\widehat A=A-\frac{\varphi'}{\varphi}\langle \nu,\partial_r\rangle.$$
	By assumption, $\widehat A$ is a non-negative symmetric form on each face of $\partial M$. Since we have assumed that $\nu=-e_n$, which is the unit outer normal vector of the face, we then have
	\begin{equation}\label{eq:hatA2}
	\widehat{\mathcal A}=\frac{H(\overbar g)}{2}-\frac{1}{2}\sum_{\lambda,\mu}\widehat A(f_*\overbar e_\lambda, e_\mu)\overbar c_\partial(\overbar e_\lambda)\otimes c_\partial(e_\mu)-\frac{n-1}{2}f^*\big(\frac{\varphi'}{\varphi}\langle \nu,\partial_r\rangle\big).
	\end{equation}
	
	Since $\widehat A\geq 0$, there exists a self-adjoint operator $L \in \Endo(TM)$ such that $\widehat A=L^2$, that is,
	$$ \widehat A(e_\lambda,e_\mu)=\langle L  e_\lambda,L e_\mu\rangle_{M}.$$
	Let us define $$\overbar L e_\nu\coloneqq \sum_\lambda\langle L e_\nu,f_*\overbar e_\lambda\rangle_M\cdot \overbar e_\lambda.$$
	
	We rewrite the second term in the right hand side of line \eqref{eq:hatA2} as follows.
	\begin{align*}
	&-\frac{1}{2}\sum_{\lambda,\mu}\widehat A(f_*\overbar e_\lambda, e_\mu)\overbar c_\partial(\overbar e_\lambda)\otimes c_\partial(e_\mu)\\
	=&-\frac{1}{2}\sum_{\lambda,\mu,\nu}\langle L(f_*\overbar e_\lambda),e_\nu\rangle_M 
	\langle L(e_\mu),e_\nu\rangle_M  \overbar c_\partial(\overbar e_\lambda)\otimes c_\partial(e_\mu)\\
	=&-\frac{1}{2}\sum_{\nu}\overbar c_\partial(\overbar Le_\nu)\otimes c_\partial(L e_\nu)\\
	=&\frac{1}{4}\sum_\nu\Big(\overbar c_\partial(\overbar Le_\nu)^2\otimes 1+1\otimes c_\partial(L e_\nu)^2-\big(\overbar c_\partial(\overbar Le_\nu)\otimes 1+1\otimes c_\partial(L e_\nu)\big)^2\Big)\\
	\geq&\frac 1 4 \sum_\nu \overbar c_\partial(\overbar Le_\nu)^2\otimes 1+\frac{1}{4}\sum_\nu 1\otimes  c_\partial(Le_\nu)^2,
	\end{align*}
	where the last inequality  follows from the fact  that the element
	\[  \big(\overbar c_\partial(\overbar Le_\nu)\otimes 1+1\otimes c_\partial(L e_\nu)\big) \] is skew-symmetric, hence its square is non-positive.  
	
	If we write $L e_\nu=\sum_\lambda L_{\nu\lambda} \cdot e_\lambda$, then we have
	\begin{align*}
	\sum_\nu c_\partial(Le_\nu)^2=&\sum_{\nu,\lambda}L_{\nu\lambda}L_{\nu\lambda}\cdot c_\partial(e_\lambda)^2+\sum_{\nu}\sum_{\lambda\ne\mu}L_{\nu\lambda}L_{\nu\mu}\cdot c_\partial(e_\lambda)c_\partial(e_\mu)\\
	=&-\sum_\lambda \widehat A_{\lambda\lambda}+\sum_{\lambda\ne\mu}\widehat A_{\lambda\mu} \cdot c_\partial(e_\lambda)c_\partial(e_\mu)\\
	=&-\tr(\widehat A)=- H(g)+(n-1)f^*\big(\frac{\varphi'}{\varphi}\langle \nu,\partial_r\rangle\big).
	\end{align*}
	Similarly, since $\langle f_*\overbar e_\nu,f_*\overbar e_\nu\rangle_M\leq\|df\|^2\cdot \langle \overbar e_\nu,\overbar e_\nu\rangle$, we have
	\begin{align*}
	\sum_\nu c_\partial(\overbar L e_\nu)^2=
	&\sum_{\lambda,\mu}\widehat A(f_*\overbar e_\lambda, f_*\overbar e_\mu)\cdot c_\partial(e_\lambda)c_\partial(e_\mu)\\
	=&-\sum_{\lambda} \widehat A(f_*\overbar e_\lambda, f_*\overbar e_\lambda) \\
	\geq & - \tr (\widehat A)=- H(g)+(n-1)f^*\big(\frac{\varphi'}{\varphi}\langle \nu,\partial_r\rangle\big).
	\end{align*}
	This finishes the proof.
\end{proof}

\subsection{Dihedral rigidity of radially convex domains in warped product manifolds}\label{subsec:2.4}

\

In this subsection, we prove the first main theorem of the paper (Theorem \ref{thm:rigidityOfWarpedProductintro}).
One of the key steps in the proof of Theorem \ref{thm:rigidityOfWarpedProductintro} is to find a non-zero solution of $\widehat D_\Psi$  in $H^1(N,S_N\otimes f^*S_M;B)$, where $\widehat D_\Psi$  is the modified Dirac operator from line \eqref{eq:Diracpotential}. According to  \cite[Theorem 6.1]{Wang:2021tq} and its proof, the operator $\widehat D_\Psi$ is essentially self-adjoint and Fredholm, provided the corresponding dihedral angles of $N$ and $M$ satisfy the \emph{strict} comparison inequality:
\[ \theta_{ij}(\overbar{g})_z <  \theta_{ij}(g)_{f(z)}\] for all $\overbar F_i, \overbar F_j$ and all $z \in \overbar F_i\cap \overbar{F}_j$.   However, Theorem \ref{thm:rigidityOfWarpedProductintro} only assumes the non-strict inequality on dihedral angles: 
\[ \theta_{ij}(\overbar{g})_z \leq  \theta_{ij}(g)_{f(z)}\] for all $\overbar F_i, \overbar F_j$ and all $z \in \overbar F_i\cap \overbar{F}_j$. So strictly speaking, \cite[Theorem 6.1]{Wang:2021tq} does not directly apply to the geometric setup of  Theorem \ref{thm:rigidityOfWarpedProductintro}. But as pointed out in the proof of   \cite[Theorem 1.7]{Wang:2021tq}, we can get around the above issue by approximating the non-strict comparison condition on dihedral angles by a sequence of strict comparison conditions on dihedral angles, which is achieved by ``artificially raising\footnote{More precisely, we introduce some auxiliary space dimensions and approximate the boundary condition $B$ of $\widehat D_\Psi$ (cf. Definition \ref{def:boundarycondition}) by a sequence of boundary conditions $B_n$ that satisfy the strict comparison condition on dihedral angles. See  the proof of \cite[Theorem 1.7]{Wang:2021tq} in \cite[Section 7]{Wang:2021tq} for the precise details.} the dihedral angles'' of $M$.  In particular, it follows from  \cite[Theorem 6.1]{Wang:2021tq} that 
the modified Dirac operator $\widehat D_\Psi$ with the boundary condition $B_n$ for each approximation is essentially self-adjoint and Fredholm. Moreover, the  domain of $(\widehat D_\Psi)_{B_n}$ is\footnote{In order to deal with general manifolds with \emph{polyhedral boundary}, we actually need to work with the Dirac operator associated to the map  $f\times \id\colon  N\times \mathbb I\to M\times \mathbb I$, where $\mathbb I$ is the unit interval and $\id\colon \mathbb I \to  \mathbb I$ is the identity map. Here the extra space $\mathbb I$ is introduced for the purpose of defining appropriate boundary conditions. But the potential $\Psi$ is still the same potential as that from line \eqref{eq:potential}. In other words, the potential $\Psi$ on $N\times \mathbb I$ is the pullback  of the potential $\Psi$ on $N$ via the projection map $N\times \mathbb I\to N$. The same remarks apply to the other constructions  in Subsection \ref{subsec:2.1}.} $H^1(N,S_N\otimes f^*S_M;B_n)$ and the Fredholm index  of $(\widehat D_\Psi)_{B_n}$ is equal to \[ \ind ((\widehat D_\Psi)_{B_n}) = \deg(f) \cdot \chi(M),\] where $\deg(f)$ is the degree of $f$
 and $\chi(M)$ is the Euler characteristic of $M$. By the assumption of Theorem \ref{thm:rigidityOfWarpedProductintro}, $ \deg(f) \cdot \chi(M)$ is nonzero, hence there exists nonzero element $\varphi_n\in H^1(N,S_N\otimes f^*S_M;B_n)$ such that   $(\widehat D_\Psi)_{B_n}(\varphi_n) = 0 $ for each $n$. Finally, we use this sequence $\{\varphi_n\}$ to obtain a nonzero element $\varphi \in H^1(N,S_N\otimes f^*S_M;B)$ such that   $(\widehat D_\Psi)_{B}(\varphi) = 0 $ by the following lemma, which is an analogue of \cite[Lemma 6.7]{Wang:2021tq}.

\begin{lemma}\label{lemma:solutionapproximate-poly}
	Let $f\colon N\to M$ be as in Theorem \ref{thm:rigidityOfWarpedProductintro} satisfying the same assumptions of Theorem \ref{thm:rigidityOfWarpedProductintro}.
	For each codimension one face $\overbar F_i$ of $N$ and its corresponding face  $F_i$ of $M$, let  $u_i$ and $v_i$ be their unit inner normal vector field, respectively. Let $\{v_{i,t}\}_{t\in[0,1]}$ be a path of smooth vector fields on $F_i$ such that $v_{i,0}=v_i$, and $v_{i,t}\to v_i$ in $C^1$-norm as $t\to 0$. Let $B_t$ be the boundary condition on sections of $S_N\otimes f^*S_M$ that on each face $\overbar F_i$ is given by
	$$(\overbar \epsilon\otimes\epsilon)(\overbar c(u_i)\otimes c(v_{i,t}))\sigma=-\sigma,$$
	where $\overbar \epsilon$ and $\epsilon$ are the $\Z_2$-grading operators on $S_N$ and $f^*S_M$. If there exists a nonzero $\sigma_t\in H^1(N,S_N\otimes f^*S_M;B_t)$ such that $\widehat D_\Psi\sigma_t=0$ for each $t>0$, then there exists a nonzero $\sigma\in H^1(N,S_N\otimes f^*S_M;B_0)$ such that  $\widehat D_\Psi\sigma=0$ and $\mathcal P\sigma=0$, where $\mathcal P$ is the Penrose operator from line \eqref{eq:penrosedef}. 
\end{lemma}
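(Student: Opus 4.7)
The plan is to normalize $\|\sigma_t\|_{L^2(N)}=1$, extract a weak $H^1$ subsequential limit $\sigma$, and verify that $\sigma$ is nonzero and satisfies the desired equations and boundary condition. The argument parallels that of \cite[Lemma 6.7]{Wang:2021tq}, with the extra book-keeping required to accommodate the zeroth-order potential $\Psi$ in $\widehat D_\Psi$.

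First I would establish a uniform $H^1$-bound on $\sigma_t$ as $t\to 0$. The key input is an analog of Proposition \ref{prop:comparison} for the perturbed boundary condition $B_t$. The boundary computation in that proposition rests on the algebraic identity $(\overbar\epsilon\otimes\epsilon)(\overbar c(u_i)\otimes c(v_i))\sigma=-\sigma$ on $\overbar F_i$; replacing $v_i$ by $v_{i,t}$ leaves the interior computation unchanged and introduces only a perturbation of order $\|v_{i,t}-v_i\|_{C^0}$ in the boundary contribution. Using the hypotheses $\Sc(\overbar g)\geq f^\ast\Sc(g)$ and $H_{\overbar g}(\overbar F_i)\geq f^\ast H_g(F_i)$ together with $\widehat D_\Psi\sigma_t=0$, one obtains
\begin{equation*}
0\;\geq\;\int_N|\mathcal P\sigma_t|^2\;-\;C\,\|v_{\cdot,t}-v_{\cdot}\|_{C^0}\int_{\partial N}|\sigma_t|^2.
\end{equation*}
Combined with the Penrose identity \eqref{eq:penrose} and the bound $\|\widehat D\sigma_t\|_{L^2}=\|\Psi\sigma_t\|_{L^2}\leq C$, this yields $\|\widehat\nabla\sigma_t\|_{L^2}$ bounded uniformly in $t$, after absorbing the $L^2(\partial N)$ boundary term using the standard trace estimate.

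Second, Rellich--Kondrachov compactness (applied away from codimension two strata, where $\sigma_t$ vanishes) produces a subsequence $\sigma_{t_k}$ converging weakly in $H^1$ and strongly in $L^2$ to some $\sigma$, so $\|\sigma\|_{L^2(N)}=1$ and in particular $\sigma\neq 0$. The trace theorem gives $\sigma_{t_k}|_{\partial N}\to\sigma|_{\partial N}$ in $L^2$, and since $v_{i,t_k}\to v_i$ in $C^0$, passing to the limit in
\begin{equation*}
(\overbar\epsilon\otimes\epsilon)(\overbar c(u_i)\otimes c(v_{i,t_k}))\sigma_{t_k}=-\sigma_{t_k}
\end{equation*}
shows that $\sigma$ satisfies the limit boundary condition $B_0$. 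Weak $H^1$-convergence together with boundedness of the coefficients of $\widehat D_\Psi$ then gives $\widehat D_\Psi\sigma=0$ distributionally, and elliptic regularity upgrades $\sigma$ to an element of $H^1(N,S_N\otimes f^\ast S_M;B_0)$. Finally, since $\sigma$ satisfies $B_0$ and $\widehat D_\Psi\sigma=0$, Proposition \ref{prop:comparison} together with the curvature/mean-curvature hypotheses forces every term on the right-hand side of \eqref{eq:comparison} to vanish, whence $\mathcal P\sigma=0$.

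The main obstacle is the first step: producing the $B_t$-version of Proposition \ref{prop:comparison} with an explicit and uniform control of the error introduced by the rotation $v_i\rightsquigarrow v_{i,t}$. The delicate point is that all cancellations in the boundary Clifford algebra of Proposition \ref{prop:comparison} were set up with $v_i$, not $v_{i,t}$; one must verify that the extra terms produced are genuinely lower order and, on codimension two strata, that the boundary conditions on adjacent faces remain jointly compatible for small $t$. This compatibility is built into the construction of the approximation via the ``artificial raising'' of dihedral angles, but it must be invoked carefully to ensure that the Clifford projectors defining $B_t$ on different faces continue to commute (or anti-commute) appropriately at the corners.
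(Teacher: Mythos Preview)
Your approach is correct and matches the paper's outline: normalize, use a $B_t$-perturbed version of Proposition~\ref{prop:comparison} to control $\|\mathcal P\sigma_t\|$, extract a subsequential limit via Rellich, and verify the limiting properties. The paper streamlines your second step by observing that the $H^1$-norm is equivalent to $\big(\|\sigma\|^2+\|\mathcal P\sigma\|^2+\|\widehat D_\Psi\sigma\|^2\big)^{1/2}$; since $\widehat D_\Psi\sigma_{t_n}=0$ and $\|\mathcal P\sigma_{t_n}\|\to 0$, once $\sigma_{t_n}$ is $L^2$-Cauchy it is automatically $H^1$-Cauchy. This \emph{strong} $H^1$ convergence delivers $\mathcal P\sigma=0$, $\widehat D_\Psi\sigma=0$, and the limiting boundary condition $B_0$ (via the bounded trace $H^1\to H^{1/2}(\partial N)$) in one stroke, so you can dispense with the weak-limit/compact-trace/elliptic-regularity patchwork and the final reapplication of Proposition~\ref{prop:comparison} at $t=0$. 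As for your ``main obstacle'': the paper simply notes that the boundary endomorphism $\mathcal A_t$ is built from $v_{i,t}$ and its first derivatives, so $v_{i,t}\to v_i$ in $C^1$ gives $\mathcal A_t\to\mathcal A_0$ uniformly, which is all that is needed.
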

\begin{proof}
	We may assume without loss of generality that  that $\|\sigma_t\|=1$.
	By Proposition \ref{prop:comparison}, for each $t>0$ and $\sigma_t\in H^1(N,S_N\otimes f^*S_M;B_t)$, we have
	\begin{equation}\label{eq:comparison2}
			0=\int_N|\tdirac_\Psi\sigma_t|^2\geq \int_N |\mathcal P\sigma_t|^2 + \int_N\mathcal R|\sigma_t|^2+\int_{\partial N}\mathcal A_t|\sigma_t|^2,
	\end{equation}
	where $\mathcal R$ and $\mathcal A_t$ are bounded endomorphisms of $S_N\otimes f^*S_M$. By assumption, we have $\mathcal A_t\to \mathcal A_0$ as $t\to 0$, since $\mathcal A_t$ determined by $v_{i, t}$ and the derivatives of $v_{i, t}$. See the proof of Proposition \ref{prop:comparison} for more details. 
	
	By the assumption of Theorem \ref{thm:rigidityOfWarpedProductintro},   we have $\mathcal R\geq 0$ and $\mathcal A_0\geq 0$. It follows from  Proposition \ref{prop:comparison} that 
	$$\lim_{t\to 0}\|\mathcal P\sigma_t\|=0.$$
	By the definition of $\mathcal P$ and $\widehat D_\Psi$ (cf. line \eqref{eq:penrose} and line \eqref{eq:Diracpotential}), we see that the $H^1$-norm of $\sigma\in H^1(N,S_N\otimes f^*S_M)$ is equivalent to the following norm
	$$(\|\sigma\|^2+\|\mathcal P\sigma\|^2+\|\widehat D_\Psi\sigma\|^2)^{1/2}.$$
	Therefore, the $H^1$-norms of $\{\sigma_t\}_{t\in[0,1]}$ are uniformly bounded. By the Rellich Lemma, there exists a convergent sequence $\{\sigma_{t_n}\}$ with respect to the $L^2$-norm, where $t_n\to 0$ as $n\to\infty$. Assume that $\sigma_{t_n}\to\sigma$ in $L^2(N,S_N\otimes f^*S_M)$. In particular, we have $\|\sigma\|=1$. 
	
	Since $\|\mathcal P\sigma_{t_n}\|\to 0$ and $\widehat D_\Psi\sigma_{t_n}=0$, it follows that  $\{\sigma_{t_n}\}$ is also a Cauchy sequence with respect to the $H^1$-norm. Therefore $\sigma$ lies in $ H^1(N,S_N\otimes f^*S_M)$ and $\sigma_{t_n}\to\sigma$ with respect to the $H^1$-norm. Note that  $\mathcal P$ and  $\widehat D_\Psi$ are bounded as linear maps from $ H^1(N,S_N\otimes f^*S_M)$ to $L^2(N,S_N\otimes f^*S_M)$,  and the Sobolev trace map is a bound linear map from $ H^1(N,S_N\otimes f^*S_M)$ to $H^{1/2}(\partial N,S_N\otimes f^*S_M)$.  Consequently, we  conclude that $\sigma$ lies in $H^1(N,S_N\otimes f^*S_M;B_0)$,  $\mathcal P\sigma=0$ and $\widehat D_\Psi\sigma=0$. This finishes the proof.  
\end{proof}

Now let us prove  Theorem \ref{thm:rigidityOfWarpedProductintro}. 

\begin{proof}[{Proof of Theorem \ref{thm:rigidityOfWarpedProductintro}}]
	Without loss of generality, we assume that both $N$ and $M$ are even-dimensional. The computation for odd-dimensional case is similar and we omit the proof. Alternatively, one can reduce the odd dimensional case to an even dimensional case by considering direct products $N\times I$ and $M\times I$. 
	
	Let $H^1(N,S_N\otimes f^*S_M;B)$ be the Sobolev $H^1$-space of sections of $S_N\otimes f^*S_M$ over $N$ that satisfy the boundary condition $B$ in Definition \ref{def:boundarycondition}. Note that $\widehat D_\Psi$ only differs from the usual twisted Dirac operator on $S_N\otimes f^*S_M$ by a bounded endomorphism. It follows from  Lemma \ref{lemma:solutionapproximate-poly} and the discussion before Lemma \ref{lemma:solutionapproximate-poly} that there exists a non-zero element  $\sigma$  in  $H^1(N,S_N\otimes f^*S_M;B)$ such that $\tdirac_\Psi\sigma=0$ and $\mathcal P\sigma=0$.

By definition of the Penrose operator in line \eqref{eq:penrosedef},  for any tangent vector $\overbar \xi$ in $TN$, we have
\begin{equation*}
\tcon_{\xi}\sigma+\frac 1 n\overbar c(\overbar \xi)\tdirac\sigma = \mathcal P_{\overbar \xi}\sigma=0.
\end{equation*}
Since we also have $(\tdirac + \Psi)
\sigma = \tdirac_\Psi \sigma=0$, it follows that 
$$\tdirac\sigma=-\Psi\sigma.$$
Therefore, the two equations above together imply that 
\begin{equation}\label{eq:ODEsigma}
	 \tcon_{\xi}\sigma - \frac 1 n\overbar c(\overbar \xi)\Psi\sigma = 0 
\end{equation}
for all $\overbar \xi\in TN$. 

As a consequence of line \eqref{eq:ODEsigma},  $\sigma$ is smooth on $N$. Moreover,  $\sigma$ extends continuously everywhere over $N$ in the following sense. For each $x\in N$ on a face with codimension $\geq 2$, consider a local chart near $x$ so that sections of $S_N\otimes f^*S_M$ near $x$ is identified with a vector in $\R^{2^n}$. Let $\gamma$ be a path on $N$ that $\gamma(t)$ lies in the interior of $N$ except $\gamma(0)=x$. Then the equation in line \eqref{eq:ODEsigma} along $\gamma$ gives rise to ordinary differential equations in $t$, whose coefficients are uniformly bounded by assumption. In particular, since $\sigma$ satisfies line \eqref{eq:ODEsigma}, it satisfies the differential equations. Therefore $\sigma$ restricted on $\gamma$, as a function with value in $\R^{2^n}$, is Lipschitz continuous for $t>0$. Hence $\sigma$ along $\gamma$ extends continuously to $\gamma(0)=x$, which we will call $\sigma(x)$. The value $\sigma(x)$ is independent of the choice of $\gamma$.

 For a given point $x\in N$ and any smooth path in $N$ starting at $x$, the restriction of $\sigma$ on the path  satisfies the homogeneous differential equation given by  \eqref{eq:ODEsigma} along the path. By the uniqueness of the solution, if $\sigma(x)=0$, then $\sigma$ vanishes on the whole path. Without loss of generality, we may assume  $N$ is connected. The above discussion implies that if $\sigma$ vanishes at any point in $N$, then  $\sigma$ has to vanish on the whole $N$, which would contradict the fact that $\sigma$ is a nontrivial section.  Therefore $\sigma$ is non-zero everywhere on $N$. Now since $\tdirac_\Psi \sigma= 0$,  the inequality \eqref{eq:comparison} of Proposition \ref{prop:comparison} implies that \begin{enumerate}[label=$(\roman*)$]
	\item 
	$\Sc(\overbar{g})_x = \Sc(g)_{f(x)}$ for all $x\in N$, 
	\item $H_{\overbar{g}}(\overbar{F}_i)_y =  H_{g}(F_i)_{f(y)}$ for all $y$ in each codimension one face $\overbar{F}_i$ of $N$. 
\end{enumerate} 

Part $(iii)$ and $(iv)$ follow from the same argument as that in the proof of \cite[Theorem 2.9]{Wang2022:fl}. For the convenience of the reader, we repeat the argument here. Let $\overbar x$ is a point in the intersection of $m$ codimension one faces of $N$ whose unit inner normal vectors at $\overbar x$ are denoted by $\overbar \nu_1,\ldots,\overbar \nu_m$. Let $\nu_1, \ldots, \nu_m$ be the unit inner normal vectors at $f(\overbar x)$ of the corresponding faces of $M$. Since $\sigma$ extends continuously at $\overbar x$ and satisfies the $m$ boundary conditions, we have
$$(\overbar \epsilon\overbar c(\overbar \nu_j)\otimes 1)\sigma(\overbar x)=-(1\otimes \epsilon c(\nu_j))\sigma(\overbar x),~\forall j=1,\ldots m.$$
For $a=(a_1,a_2,\ldots,a_m)\in\R^m$, we define
$$\nu_a\coloneqq \sum_{j=1}^m a_j\nu_j \textup{ and } \overbar \nu_a\coloneqq \sum_{j=1}^m a_j\overbar \nu_j.$$
Clearly, we have 
$$(\overbar \epsilon\overbar c(\overbar \nu_a)\otimes 1)\sigma(\overbar x)=-(1\otimes \epsilon c(\nu_a))\sigma(\overbar x).$$
By taking vector norms of both sides, we obtain
$$|\overbar \nu_a|_{\overbar g}^2\cdot |\sigma(\overbar x)|^2=|\nu_a|_g^2\cdot |\sigma(\overbar x)|^2, $$
since $c(\nu_a)(-c(\nu_a)^*)=c(\nu_a)^2=|\nu_a|_{g}^2$ and $\overbar c(\overbar \nu_a)(-\overbar c(\overbar \nu_a)^*)=\overbar c(\overbar \nu_a)^2=|\overbar \nu_a|_{\overbar g}^2$. It follows that 
$|\overbar \nu_a|_{\overbar g}=|\nu_a|_g$, since $|\sigma(\overbar  x)|\ne 0$. 

Consider the two symmetric quadratic forms
$$\overbar Q,Q\colon \R^m\times\R^m\to\R$$
defined by
$$\overbar Q(a,b)\coloneqq \langle \overbar \nu_a,\overbar \nu_b\rangle_{\overbar g} \textup{ and } Q(a,b)\coloneqq \langle \nu_a,\nu_b\rangle_{g}.$$
The above discussion shows that 
$$Q(a,a)=\overbar Q(a,a),~\forall a\in\R^m.$$
By the polarization identity, we have
$$Q(a,b)=\frac 1 4(Q(a+b,a+b)-Q(a-b,a-b)),$$
$$\overbar Q(a,b)=\frac 1 4(\overbar Q(a+b,a+b)-\overbar Q(a-b,a-b)).$$
Hence $Q$ and $\overbar Q$ are identical. In particular, we have 
$$\langle \nu_i,\nu_j\rangle_g=\langle \overbar \nu_i,\overbar \nu_j\rangle_{\overbar g},~\forall i,j=1,\ldots,m.$$
This proves part $(iv)$. Now part $(iii)$ is a direct consequence of part $(iv)$.

Now we assume that $\varphi$ is strictly log-concave. 
Since the inequality  \eqref{eq:comparison} achieves equality at $\sigma$, the proof of Proposition \ref{prop:comparison} shows that the inequality \eqref{eq:gradpsi} also becomes  equality at $\sigma$, that is,
\begin{equation}\label{eq:dr=1}
(\overbar\epsilon\otimes\epsilon)(\overbar c(\grad\overbar r))\otimes c(\partial_r)\sigma=-\sigma, 
\end{equation}
where $\overbar r = f^\ast r$ and $\grad \overbar r$ is the gradient of $\overbar r$. 

Let us write  $\alpha=\overbar\epsilon\otimes\epsilon c(\partial_r)$, then Equation \eqref{eq:dr=1} becomes 
\begin{equation}\label{eq:dr=1equiv} \alpha\overbar c(\grad\overbar r)\sigma=-\sigma,  \textup{ or equivalently, }   \overbar c(\grad\overbar r)\sigma=-\alpha\sigma.
\end{equation}
Since $\sigma$ is non-zero everywhere, we have that $|\grad\overbar r|=1$ everywhere on $N$. 

Now we show that the metric $\overbar g$ on $N$ is also a warped product metric of the form $d\overbar r^2+\overbar \varphi^2\overbar h$ with $f_*(\partial_{\overbar r})=\partial_r$ and $\overbar\varphi=f^*\varphi$.
In the following computation, strictly speaking, we should use $f^\ast\varphi$ and $f^\ast(\varphi')$ in place of  $\varphi$ and $\varphi'$, where  $f^\ast\varphi$ and $f^\ast(\varphi')$  are the pullbacks of the functions $\varphi$ and $\varphi'$ from $M$ on $N$. But in order to avoid cumbersomeness of our notation, we will continue to  write $\varphi$ and $\varphi'$, since no confusion is likely to arise. 

By definition of $\Psi$ in line \eqref{eq:potential} and line \eqref{eq:dr=1equiv}, we have 
\begin{equation}\label{eq:nablasigma}
\tcon_{\overbar\xi}\sigma=\frac 1 n\overbar c(\overbar \xi)\Psi \sigma=
\frac{\varphi'}{2\varphi}\overbar c(\overbar \xi)\alpha\sigma=-\frac{\varphi'}{2\varphi}\overbar c(\overbar \xi)\overbar c(\grad\overbar r)\sigma.
\end{equation}
 Let us first  compute the Hessian of the function $\overbar r = f^\ast r$ over $N$. We have 
\begin{align*}
\overbar c(\ncon_{\overbar \xi} \grad\overbar r)\sigma=&\nabla_{\overbar\xi}( \overbar c(\grad\overbar r) \sigma)- \overbar c(\grad\overbar r)\nabla_{\overbar\xi}\sigma\\
=&\tcon_{\overbar\xi}( \overbar c(\grad\overbar r) \sigma)- \overbar c(\grad\overbar r)\tcon_{\overbar\xi}\sigma  \quad \textup{ by definition of $\tcon$ in line \eqref{eq:hatcon} }\\
=& -(\alpha+\overbar c(\grad\overbar r))\tcon_{\overbar\xi}\sigma \quad \textup{  by line \eqref{eq:dr=1equiv} }\\
=&\frac{\varphi'}{2\varphi}(\alpha+\overbar c(\grad\overbar r)) \overbar c(\overbar \xi)\overbar c(\grad\overbar r) \sigma \quad \textup{  by line  \eqref{eq:nablasigma}} \\
=&\frac{\varphi'}{2\varphi}\overbar c(\overbar\xi)\sigma +\frac{\varphi'}{2\varphi}\overbar c(\grad\overbar r)\overbar c(\overbar\xi)\overbar c(\grad\overbar r)\sigma \quad \textup{  by line \eqref{eq:dr=1equiv} } \\
=&2\frac{\varphi'}{2\varphi}\left(\overbar c(\overbar \xi)-\langle\overbar \xi,\grad\overbar r\rangle \overbar c(\grad\overbar r)\right)\sigma
= \frac{\varphi'}{\varphi}\overbar c\big(\overbar \xi-\langle\overbar \xi,\grad\overbar r\rangle\grad\overbar r\big)\sigma.
\end{align*} 
It follows that 
\begin{equation*}
\overbar c\left(\ncon_{\overbar \xi} \grad\overbar r-\frac{\varphi'}{\varphi}\left(\overbar \xi-\langle\overbar \xi,\grad\overbar r\rangle\grad\overbar r\right)\right)\sigma=0.
\end{equation*}
Since $\sigma$ is non-zero everywhere, we obtain that
\begin{equation*}
\ncon_{\overbar \xi} \grad\overbar r=\frac{\varphi'}{\varphi}\left(\overbar \xi-\langle\overbar \xi,\grad\overbar r\rangle\grad\overbar r\right).
\end{equation*}
Equivalently, we have
\begin{equation*}
\hess\overbar r=\frac{\varphi'}{\varphi}\big(\overbar g-\grad\overbar r\otimes \grad\overbar r\big).
\end{equation*}
In particular, the flow lines generated by  the gradient $\grad\overbar r$ of $\overbar r$ on $N$ are unit speed geodesics. 


Let $x$ be a point in $M$. Along the geodesic generated by $\grad{\overbar r}$ near $x$,  the metric $\overbar g$ of $N$ decomposes into 
$$\overbar g= (\grad{\overbar r})^2+\overbar g_{\overbar r},$$
where $\overbar g_{\overbar r}$ is a metric on the level set of $\overbar r$.
By definition of the Hessian (cf. \cite[Proposition 7, page 47]{Petersen}), we have 
$$L_{\partial_{\overbar r}}(\overbar g)=2\hess\overbar r=\frac{2\varphi'}{\varphi}\big(\overbar g-\grad\overbar r\otimes \grad\overbar r\big),$$
where $L_{\partial_{\overbar r}}(\overbar g)$ is the Lie derivative of $\overbar g$ with respect to $\partial_{\overbar r}$. 
Hence for $\overbar g_{\overbar r}$ from the above decomposition of $\overbar g$, we have 
\begin{equation*}
L_{\partial_{\overbar r}}(\overbar g_{\overbar r})=\frac{2\varphi'}{\varphi}\overbar g_{\overbar r}.
\end{equation*}
Recall that we have written $\varphi$ to mean $f^\ast \varphi$ in order to avoid cumbersomeness of our notation. So the above computation shows that  $\overbar g_{\overbar r}=(f^\ast\varphi)^2\overbar h$ for some fixed metric $\overbar h$ on every level set of the function $\overbar r$. Therefore $N$ is a codimension zero submanifold of the warped product manifold $(\overbar X\times J,\overbar g=(\grad \overbar r)^2+(f^\ast\varphi)^2\overbar h)$. 

Since we have shown that the metric $\overbar g$ is also a warped product metric $\overbar g_{\overbar r}=(f^\ast\varphi)^2\overbar h$, the scalar curvature formula for warped product metrics (cf. Equation \eqref{eq:warpscalar}) and  the inequality in Lemma \ref{lemma:curvature>=} (which is now an equality by the above discussion) imply that $\Sc(\overbar h) = f^\ast \Sc(h)$, and the same proof for Part (I) of \cite[Theorem 1.7]{Wang:2021tq} shows that $f\colon (N, \overbar g) \to (M, g)$ is a local isometry. This proves part (I). 

 Part  (II) follows from a straightforward computation similar to that in the proof of  Lemma \ref{lemma:secondff>=} (cf. the proof of \cite[Proposition 3.1]{Wang2022:fl} for the precise details). This finishes the proof. 
\end{proof}

\section{Dihedral rigidity of  hyperbolic manifolds with polyhedral boundary}\label{sec:hyperbolic}

In this section, by combining the methods developed by the authors in \cite{Wang2022:fl}, we extend the techniques in Section \ref{sec:radconvexrigidity} to prove a dihedral rigidity theorem  for a large class of hyperbolic manifolds with polyhedral boundary.  

More precisely, we view the standard hyperbolic space $\mathbb H^n$ as a warped product manifold as follows. Let $X$ be the standard Euclidean space $\R^{n-1}$ and $h$  the flat metric  on $\R^{n-1}$. Then $\mathbb H^n$ is isometric to the warped product space  $\R^n=X\times \R$ with the metric
$$g=dr^2+e^{2r}h.$$ 
Recall that each  leaf $\R^{n-1}\times \{r\}$ is usually called a horosphere.

Now let us prove Theorem \ref{thm:hyperbolicComparison-intro}
\begin{proof}[Proof of Theorem \ref{thm:hyperbolicComparison-intro}]
	Without loss of generality, we assume that both $N$ and $M$ are even-dimensional. The computation for odd-dimensional case is similar and we omit the proof.
	
		Let $H^1(N,S_N\otimes f^*S_M;B)$ be the Sobolev $H^1$-space of sections of $S_N\otimes f^*S_M$ over $N$ that satisfy the boundary condition $B$ in Definition \ref{def:boundarycondition}. Similar to the proof of Theorem \ref{thm:rigidityOfWarpedProductintro}, there exists a nonzero element  $\sigma\in H^1(N,S_N\otimes f^*S_M;B)$ in the kernel  of  $\tdirac_\Psi=\tdirac+\Psi$. 
	Since the metric $h$ on $X = \mathbb R^{n-1}$ is flat and $(\log\varphi)''=0$, Lemma \ref{lemma:curvature>=} and line \eqref{eq:gradpsi} holds automatically without assuming $f$ is distance-non-increasing in the interior. Only Lemma \ref{lemma:secondff>=} requires the assumption that  $f$ is distance-non-increasing on the boundary. It follows that Proposition \ref{prop:comparison} still applies in this case. 
	The same argument from the proof of Theorem \ref{thm:rigidityOfWarpedProductintro} shows that $\sigma$ is non-zero everywhere. Furthermore, part $(i), (ii), (iii)$ and $(iv)$  follow from the same proof of part $(i), (ii), (iii)$ and $(iv)$ of Theorem \ref{thm:rigidityOfWarpedProductintro}.


Now let us prove part $(v)$. For brevity, let us write  $\alpha=(\overbar \epsilon\otimes\epsilon)(1\otimes c(\partial_r))$.  Since $\varphi = e^{r}$ in the current setup, we have 
	\[ \Psi =  \frac{n}{2}f^*(\frac{\varphi'}{\varphi})\cdot (\overbar\epsilon\otimes\epsilon)\cdot (1
	\otimes  c(\partial_r)) = \frac{n}{2} \alpha.\]
	Since $\tdirac\sigma=-\Psi\sigma$, it follows that 
	\begin{equation}\label{eq:nablaHatXiSigma}
	\tcon_{\overbar \xi}\sigma=\frac 1 2\overbar c(\overbar \xi)\alpha\sigma,
	\end{equation}
or equivalently, \begin{equation}\label{eq:nablaXiSigma}
	\nabla_{\overbar\xi}\sigma=\frac 1 2\overbar c(\overbar \xi)\alpha\sigma-\frac 1 2 c(f_*\overbar\xi\wedge\partial_r)\sigma
\end{equation}
for all $\overbar \xi\in TN$.

	Let $F^t$ be the top of $M$, namely $F^t$ is contained in a leaf $X\times\{r_0\}$, and every point in $M$ connects to a point in $F^t$ via a geodesic in the direction of $\partial_r$. Let $\overbar F^t$ be the face of $N$ that is mapped by $f$ to the top face $F^t$ of $M$. Now we show that the top face $\overbar F^t$ of $N$ is flat. On $\overbar F^t$, the section $\sigma$ satisfies the boundary condition 
	$$(\overbar\epsilon\otimes\epsilon)(\overbar c(\overbar e_n)\otimes c(e_n))\sigma=-\sigma$$
	where $\overbar\epsilon\otimes\epsilon$ is the grading operator on $S_N\otimes f^*S_M$, $\overbar e_n$ is the unit inner normal vector of $\overbar F^t$, and $e_n=-\partial_r$ is the unit inner normal vector of $F^t$. Equivalently, $\overbar c(\overbar e_n)\sigma=\alpha\sigma$, where $\alpha = (\overbar \epsilon\otimes\epsilon)(1\otimes c(\partial_r))$. In particular, Equation \eqref{eq:nablaXiSigma} becomes
	$$\nabla_{\overbar\xi}\sigma= -\frac 1 2 \overbar c(\overbar e_n)\overbar c(\overbar \xi)\sigma-\frac 1 2 c(e_n)c(f_*\overbar \xi)\sigma$$
	for all  $\overbar\xi \in T\overbar F^t$. Moreover,  note that 
	\begin{align*}
	\overbar c(\ncon_{\overbar\xi}\overbar e_n)\sigma=&\nabla_{\overbar\xi}\big(\overbar c(\overbar e_n)\sigma\big)-\overbar c(\overbar e_n)\nabla_{\overbar\xi}\sigma=\tcon_{\overbar\xi}\big(\overbar c(\overbar e_n)\sigma\big)-\overbar c(\overbar e_n)\tcon_{\overbar\xi}\sigma\\
	=&\alpha\tcon_{\overbar\xi}\sigma-\overbar c(\overbar e_n)\tcon_{\overbar\xi}\sigma
	=\alpha\frac 1 2\overbar c(\overbar\xi)\alpha\sigma-\overbar c(e_n)\frac 1 2\overbar c(\overbar\xi)\alpha\sigma\\
	=&-\overbar c(\overbar\xi)\sigma 
	\end{align*}
for all  $\overbar\xi \in T\overbar F^t$.
	It follows that  the second fundamental form $A$ of $\overbar F^t$ in $N$ is given by $A(\overbar\xi)=-\overbar\xi$.	
To summarize, we see that 
	\begin{equation}
	\nabla^\partial_{\overbar\xi}\sigma=0,
	\end{equation}
for all  $\overbar\xi \in T\overbar F^t$, where $\nabla^\partial$ is the boundary connection defined in line \eqref{eq:boundaryconnection}.  In other words, the restriction of $\sigma$ on $\overbar F^t$ is parallel with respect to $\nabla^\partial$.   Note that the connection $\nabla^\partial$ is the precisely the canonical Riemannian connection on $S_N\otimes f^\ast S_M$ over $\overbar F^t$ with the respect to the metric $\overbar g|_{\overbar F^t}$ on $\overbar F^t$ and the metric  $h$ on $F^t$. Since the top face $F^t$ of $M$ is flat domain in $\mathbb R^{n-1}$, there exists a set of smooth sections $\{e_1, \cdots, e_n\}$ of $S_M|_{F^t}$ such that each $e_i$ is parallel with respect to the connection $\nabla^{S_M,\partial}$ (from line  \eqref{eq:nablaSMBoundary}) and  $\{e_1, \cdots, e_n\}$ forms an orthonormal basis at every point $x\in F^t$. Now by applying each of the Clifford multiplications $\{c(e_{i_1}\wedge \cdots\wedge e_{i_k})\}$   to  $\sigma$ and following the same argument in the proof of \cite[Theorem 2.9]{Wang2022:fl}, we see that the bundle $S_N|_{\overbar F^t}$ is flat with respect to the connection $\nabla^{S_N,\partial}$ (from line \eqref{eq:nablaSNBoundary}). It follows that the metric $\overbar g|_{\overbar F^t}$ on $\overbar F^t$ is flat.
	
	Now we prove that $N$ is also hyperbolic. Let $v_1,\ldots,v_{n-1}$ be the vector fields on $M$ generated by the coordinates of $X\subset \R^{n-1}$. In particular, every $v_i$ is parallel along each leaf of $M$.   A direct computation shows that
	$$\mcon_{v_i}v_j=-\delta_{ij}\partial_r,~\mcon_{\partial_r}v_j=v_j.$$
	Set $w_j=e^{-r}v_j$. For any smooth section $\eta$ of $S_M$, we have
	\begin{align*}
	&\tcon^{S_M}_{v_i}\big( c(w_j)\eta\big)-c(w_j)\tcon^{S_M}_{v_i}\eta\\
	=&c(\mcon_{v_i}w_j)\eta+\frac 1 2 c(v_i)c(\partial_r)c(w_j)\eta-\frac 1 2c(w_j)c(v_i)c(\partial_r)\eta\\
	=&-e^{-r}\delta_{ij}c(\partial_r)\eta+\langle v_i,w_j\rangle c(\partial_r)\eta=0
	\end{align*}
	and
	\begin{align*}
	&\tcon^{S_M}_{\partial_r}\big( c(w_j)\eta\big)-c(w_j)\tcon^{S_M}_{\partial_r}\eta\\
	=&c(\mcon_{\partial_r}w_j)\eta=
	c\big(\mcon_{\partial_r}(e^{-r}v_j)\big)\eta=0.
	\end{align*}
In particular,  for the chosen section $\sigma$ above, we have
	\begin{equation}\label{eq:twistedparallel}
	\tcon_{\overbar\xi}\big( c(w_j)\sigma\big)=c(w_j)\tcon_{\overbar \xi}\sigma.
	\end{equation}
	all $\overbar\xi \in TN$. 
	
	Let us set 
	$$\Lambda=\{w_1,\ldots,w_{n-1},\partial_r \}.$$
	For any subset $\lambda\subseteq\Lambda$, we define
	$$\sigma_{\lambda}\coloneqq \Big(\prod_{w\in\lambda}c(w)\Big)\sigma.$$
It follows from  line \eqref{eq:nablaHatXiSigma}  and line \eqref{eq:twistedparallel} that 
	\begin{equation}\label{eq:nablaXiSigmaLambda}
	\tcon_{\overbar \xi}\sigma_\lambda=\pm\frac 1 2\overbar c(\overbar \xi)\alpha\sigma_\lambda.
	\end{equation}
	
	Since we have shown $(\overbar F^t, \overbar g|_{\overbar F^t})$ is flat, a direct computation shows that the collection $\{\sigma_\lambda|_{\overbar F_0}\}_{\lambda\in\Lambda}$ is linearly independent at every point $x\in \overbar F^t$ (cf. the proof of \cite[Theorem 2.9]{Wang2022:fl}). In particular, there exists an open neighborhood $U$ of $\overbar F^t$ such that  $\{\sigma_\lambda\}_{\lambda\in\Lambda}$ is linearly independent at every point in $U$. Denote   the curvature form associated to the connection $\tcon$  on $S_N\otimes f^*S_M$ by
	$$\widehat{\mathcal R}_{v,w}\coloneqq [\tcon_v,\tcon_w]-\tcon_{[v,w]}.$$
	From line \eqref{eq:nablaXiSigmaLambda}, we see that
	$$\widehat{\mathcal R}_{v,w}(\sigma_\lambda)=\frac 1 2\overbar c(v\wedge w)\sigma_\lambda.$$
	Since the connection $\tcon^{S_M}$ on $S_M$ is flat, it follows  that the curvature form  $\mathcal R^{S_N}$ for the connection $\nabla^{S_N}$ on $S^N$ is given by 
	\[  {\mathcal R}^{S_N}_{v,w}(\eta)=\frac 1 2\overbar c(v\wedge w)\eta \]
	for all smooth sections $\eta$ of $S_N$.  By \cite[Theorem 2.7]{spinorialapproach}, we see that the Levi-Civita connection $\ncon$ on $TN$ over $U$ has constant sectional curvature $-1$, hence the metric $\overbar g$ on $U$ is hyperbolic. Moreover, since the top face $\overbar F^t$ is flat and has mean curvature $(n-1)$, it follows that $\overbar F^t$ is locally a horosphere.
	
Note that $\{\sigma_\lambda\}$ being linearly independent is an open condition. The above discussion showed that if $\{\sigma_\lambda\}$  is linearly independent at $x\in N$, then the metric $\overbar g$ is hyperbolic in a neighborhood of $x$. To show that $\overbar g$ is hyperbolic on the whole $N$,  it suffices to show that the set where $\{\sigma_\lambda\}$ is linearly independent is also closed, since $N$ is connected. For any $y \in N$, consider a smooth path $\gamma$ in $N$ connecting $y$ to a point $x_0$ in $\overbar F^t$. Without loss of generality, we assume $\{\sigma_\lambda\}$ is linearly independent at every $x\in \gamma$ except possibly at $y$. By the above discussion, there exists a neighborhood $V$ of $\gamma\backslash\{y\}$ in $N$ such that the metric $\overbar g$ have constant sectional curvature $-1$ on $V$. By continuity, we see the metric $\overbar g$ has constant sectional curvature $-1$ on the closure $\overbar V$ of $V$. By lifting all data to the universal covering of $\overbar V$ if necessary, we may  without loss of generality view $\overbar V$ as a subspace of the hyperbolic space $\mathbb H^n$.  Since $M$ is a subspace of $\mathbb H^n$ by assumption, it follows that the pullback  bundle $f^\ast TM$ can be isometrically identified with the bundle $TN$ over $\overbar V$, which induces an identification between $f^\ast S_M$ and $S_N$ over $\overbar V$.  We emphasize that such an identification is generally \emph{not} induced by the map $f$. Without loss of generality, we can choose the above identification so that it identifies the unit inner normal vector $\overbar e_n$ with $-\partial_r$. Let us denote by $\partial_{\overbar r}$ the vector in $T_xN$ corresponding to the vector $\partial_r \in (f^\ast TM)_x$ for each $x\in \overbar V$ under the above identification. Furthermore, if we view the function $r$ on $M$ as a scalar endomorphism of $TM$, then we denote by $\overbar r$ the corresponding endomorphism on $TN\cong f^\ast TM$ over $\overbar V$. 

Let us define a new connection 
\begin{equation}\label{eq:flatconn}
\widetilde \nabla \coloneqq \tcon^{S_N}\otimes 1 + 1\otimes  \tcon^{S_M}
\end{equation} 
where $\tcon^{S_N}_{\overbar \xi} \coloneqq  \nabla_{\overbar \xi}^{S_N} + \frac{1}{2} \overbar c(\prescript{N}{}\nabla_{\overbar \xi} \partial_{\overbar r}) \overbar c(\partial_{\overbar r}) $ and $\tcon^{S_M}$ is given in line \eqref{eq:nablaHat}. It follows from  Equation \eqref{eq:nablaHatXiSigma} that 
\begin{equation}\label{eq:parallel}
	\widetilde \nabla_{\overbar \xi}\sigma =  \frac 1 2\overbar c(\overbar \xi)\alpha\sigma  + \frac{1}{2} \overbar c(\prescript{N}{}\nabla_{\overbar \xi} \partial_{\overbar r}) \overbar c(\partial_{\overbar r}) \sigma
\end{equation}
where $\alpha=(\overbar \epsilon\otimes\epsilon)(1\otimes c(\partial_r))$. Note that we have 
\[ \tcon_{\overbar \xi}^{S_N} \overbar c(\partial_{\overbar r}) = \overbar c(\partial_{\overbar r}) \tcon_{\overbar \xi }^{S_N} \textup{ and }  \tcon_{\xi}^{S_M} c(\partial_r) = c(\partial_r) \tcon_{\xi}^{S_M}  \]
for all $\overbar \xi\in TN$ and $\xi \in TM$. A direct computation shows that 
\begin{align*}
  \widetilde \nabla_{\overbar \xi} \big ( (\overbar\epsilon\otimes\epsilon)(\overbar c(\partial_{\overbar r})\otimes c(\partial_r)) e^{-\overbar r/2}\sigma \big)
 = &  (\overbar\epsilon\otimes\epsilon)(\overbar c(\partial_{\overbar r})\otimes c(\partial_r)) \widetilde \nabla_{\overbar \xi}(e^{-\overbar r/2}\sigma)  \\
 = &  - \widetilde \nabla_{\overbar \xi}(e^{-\overbar r/2}\sigma)
\end{align*} 
for all $\overbar \xi \in TN|_{\overbar V}$. It follows that the section 
\[  \rho \coloneqq (\overbar\epsilon\otimes\epsilon)(\overbar c(\partial_{\overbar r})\otimes c(\partial_r)) e^{-\overbar r/2}\sigma + e^{-\overbar r/2}\sigma \]
is parallel with respect to $ \widetilde \nabla$.  

Recall that  
$\sigma$ satisfies the boundary condition $B$, that is,  
$$(\overbar\epsilon\otimes\epsilon)(\overbar c(\overbar e_n)\otimes c(e_n))\sigma=-\sigma \textup{ at $x_0 \in \overbar F^t$,} $$
or equivalently, 
$$(\overbar\epsilon\otimes\epsilon)(\overbar c(\partial_{\overbar r})\otimes c(\partial_r)) \sigma= - \sigma \textup{ at $x_0 \in \overbar F^t$,}$$ since  $\overbar e_n = - \partial_{\overbar r}$ and $e_n = -\partial_r$ at $x_0$. Therefore, $\rho = 0  \textup{ at } x_0 \in \overbar F^t.$
It follows that $\rho \equiv 0$ on $\overbar V$, since $\rho$ is parallel with respect to $\widetilde \nabla$. In other words, we have 
\[ (\overbar\epsilon\otimes\epsilon)(\overbar c(\partial_{\overbar r})\otimes c(\partial_r)) \sigma= - \sigma  \textup{ over $\overbar V$}.\]
By plugging the above equality into Equation \eqref{eq:parallel}, we see that 
\[ \widetilde \nabla_{\overbar \xi}(e^{-\overbar r/2}\sigma) = 0  \]
for all  $\overbar \xi \in TN|_{\overbar V}$. Similarly, it follows from  Equation \eqref{eq:twistedparallel} that each $e^{-\overbar r/2}\sigma_\lambda$ is parallel with respect to $\widetilde \nabla$. Consequently, we have 
\[ e^{-\overbar r(x)}\langle \sigma_\lambda, \sigma_\mu\rangle_x = e^{-\overbar r(x_0)}  \langle \sigma_\lambda, \sigma_\mu\rangle_{x_0}\]
for all $x\in \overbar V$ and all subsets $\lambda, \mu$ of $\Lambda$. Recall that $\{\sigma_\lambda\}$ is linearly independent at $x_0$. Hence  $\{\sigma_\lambda\}$ is linearly independent at every $x\in \overbar V$. 
 This completes the proof that the metric $\overbar g$ is hyperbolic on the whole $N$.

	 Part (I) follows from the same argument for part  (II) of Theorem \ref{thm:rigidityOfWarpedProductintro}. Let us now prove part (II).    By lifting all data to the universal covering of $N$ if necessary, we can assume without loss of generality view $N$ as a subspace of the hyperbolic space $\mathbb H^n$ and $\overbar F^t$  part of a horosphere in $\mathbb H^n$.   Since $M$ is a subspace of $\mathbb H^n$ by assumption, it follows that the pullback  bundle $f^\ast TM$ can be isometrically identified with the bundle $TN$, which induces an identification between $f^\ast S_M$ and $S_N$. Moreover, since both $\overbar F^t$ and $F^t$ are horospheres, we can choose the identification of $f^\ast TM$ and $TN$ so that the inner normal vector $\overbar e_n$ of $\overbar F^t$ is identified with the inner normal vector $e_n$ of $F^t$. We denote by $\partial_{\overbar r}\in TN$ the vector corresponding to $\partial_{r}\in TM$ under this identification. By the same argument in the proof of part $(iv)$, $\{e^{-\overbar r/2}\sigma_\lambda\}$ is a collection of parallel sections (with respect to the connection $\widetilde \nabla$ in line \eqref{eq:flatconn}) such that it forms an orthogonal basis of $S_N\otimes f^\ast S_M$ at every $x\in N$. Now the same argument for \cite[Theorem 2.9, part $(iii)$]{Wang2022:fl} finishes the proof. 	
\end{proof}

\begin{remark}
	Roughly speaking, part (III) of the above theorem asserts that $N$ and $M$ have the same geometric shape, although they could be of different sizes. 
\end{remark}
\begin{remark}
	We emphasize that there is \emph{not} much control over the global behavior of the map $f$ in Theorem \ref{thm:hyperbolicComparison-intro}. In particular,  in general the map $f$ does not preserve the leafwise structures, that is, $f$ generally does not map the horospheres of $N$ to the horospheres of $M$. See Theorem \ref{thm:hyperbolicPrismComparison} below for an example where the map $f$ can be rather arbitrary. 
\end{remark}
%
%

As an special case of Theorem \ref{thm:hyperbolicComparison-intro}, we have the following rigidity theorem for parabolic prisms in hyperbolic spaces.
\begin{definition}\label{def:prism}
	Let $P$ be a convex polyhedron in the Euclidean space $\R^{n-1}$. We view $\mathbb R^{n-1}$ as a leaf $\mathbb R^{n-1} \times \{r_0\}$ in $\mathbb H^n$, where $\mathbb H^n$ is viewed as a warped product space $\mathbb R^{n-1}\times \mathbb R$ with metric 
		$$g=dr^2+e^{2r}h.$$
A subspace $M=P\times I \subset \R^{n-1}\times \mathbb R = \mathbb H^n$ with the inherited metric is called  a parabolic  prism in $\mathbb H^n$.  
\end{definition}
We call the two components of $P\times\partial I$ the \emph{top} and \emph{bottom} of the parabolic prism $P\times I$. In particular, the top and bottom are flat and has mean curvature $n-1$ and $-(n-1)$ respectively. The side faces of $M$ (i.e., the codimension one face that lie in $\partial P\times I$)  are totally geodesic and have mean curvature zero.

\begin{theorem}[Theorem \ref{thm:hyperbolicPrismComparisonIntro}]\label{thm:hyperbolicPrismComparison}
	Let $(M, g)$ be a parabolic prism  in $\mathbb H^n$ and $f\colon (N, \overbar g)\to (M, g)$ a spin polytope map between $n$-dimensional manifolds with polyhedral boundary. 
	Assume that the degree of $f$ is non-zero. If
	\begin{enumerate}[label=$(\arabic*)$]
		\item 
		$\Sc(\overbar{g})_x \geq \Sc(g)_{f(x)} = -n(n-1)$ for all $x\in N$, 
		\item $H_{\overbar{g}}(\overbar{F}_i)_y \geq  H_{g}(F_i)_{f(y)}$ for all $y$ in each codimension one face $\overbar{F}_i$ of $N$, 
		\item $\theta_{ij}(\overbar{g})_z\leq \theta_{ij}(g)_{f(z)}$ for all $\overbar F_i, \overbar F_j$ and all $z \in \overbar F_i\cap \overbar{F}_j$,   
	\end{enumerate} 
	then \begin{enumerate}[label=$(\roman*)$]	
		\item 
		$\Sc(\overbar{g})_x = \Sc(g)_{f(x)}$ for all $x\in N$, 
		\item $H_{\overbar{g}}(\overbar{F}_i)_y =  H_{g}(F_i)_{f(y)}$ for all $y$ in each codimension one face $\overbar{F}_i$ of $N$, 
		\item $\theta_{ij}(\overbar{g})_z =  \theta_{ij}(g)_{f(z)}$ for all $\overbar F_i, \overbar F_j$ and all $z \in \overbar F_i\cap \overbar{F}_j$. 
	\end{enumerate} Furthermore, $N$ is also a parabolic prism in $\mathbb H^n$, and  is locally isometric to  $M$.
\end{theorem}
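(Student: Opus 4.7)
The plan is to deduce Theorem \ref{thm:hyperbolicPrismComparison} from Theorem \ref{thm:hyperbolicComparison-intro} together with Remark \ref{remark:top+geodesic}, and then to promote the resulting rigidity of boundary normals into a local isometry between $N$ and $M$.

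First I verify that a parabolic prism $M = P \times I$ meets all hypotheses of Theorem \ref{thm:hyperbolicComparison-intro}. It is radially convex: the top and bottom faces lie in horospherical leaves $\mathbb{R}^{n-1} \times \{r\}$ (condition (2) of Definition \ref{def:convexWarpedProductIntroduction}), while each side face $F_i^P \times I$ contains $\partial_r$, has inner normal orthogonal to $\partial_r$ pointing into $M$ by convexity of $P$, and is totally geodesic (condition (1)). The Euler characteristic is $\chi(M) = \chi(P) \cdot \chi(I) = 1 \ne 0$. Moreover every codimension-one face of $M$ is either orthogonal to $\partial_r$ (top and bottom) or parallel to $\partial_r$ and totally geodesic (side faces), so Remark \ref{remark:top+geodesic} lets me drop the hypothesis that $f$ be distance non-increasing on $\partial N$. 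Applying Theorem \ref{thm:hyperbolicComparison-intro} then yields parts (i), (ii), (iii) of the conclusion, as well as hyperbolicity of $(N, \overbar g)$ and flatness of the top face of $N$.

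To obtain the local isometry, I pass to the universal cover of $N$ (which inherits all relevant data since $M$ is simply connected and so $f$ lifts) and apply part (III) of Theorem \ref{thm:hyperbolicComparison-intro}. This gives an isometric embedding of the universal cover into $\mathbb{H}^n$ together with an ambient isometry $\vartheta$ of $\mathbb{H}^n$ such that $\vartheta_\ast(\overbar e_n(x)) = e_n(f(x))$ at every interior point $x$ of every codimension-one face of $N$. I now identify the hypersurface type of each face of $N$. On a side face $F_i$ of $M$, the unit inner normal is parallel along $F_i$ (since $F_i$ is totally geodesic in $\mathbb{H}^n$); transporting through $\vartheta$ shows that the unit inner normal on the corresponding face $\overbar F_i$ is also parallel, forcing $\overbar F_i$ to be totally geodesic. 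On the top or bottom face the unit inner normal on $M$ is $\mp \partial_r$ and satisfies $\nabla_X (\mp \partial_r) = \mp X$ for every horizontal $X$; transporting through $\vartheta$ forces the unit normal on $\overbar F^{t/b}$ to satisfy the same identity, so $\overbar F^{t/b}$ is totally umbilic with principal curvatures $\mp 1$, i.e.\ a horosphere. Together with the equality of all dihedral angles and the combinatorial equivalence $N \simeq M$, this identifies $\vartheta(N) \subset \mathbb{H}^n$ locally with a parabolic prism of the same combinatorial type as $M$, so $N$ is locally isometric to $M$.

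The main obstacle is this last structural argument: upgrading a pointwise matching of unit inner normals (via the ambient hyperbolic isometry $\vartheta$) into a face-by-face identification of hypersurface types. The key point is that in $\mathbb{H}^n$ the second fundamental form of a codimension-one submanifold is determined by the first covariant derivative of its unit normal along tangent directions, and since $\vartheta$ intertwines the Levi--Civita connections, these derivatives at corresponding points are intertwined. This transfers the vanishing (side faces) or $\mp 1$-umbilic (top and bottom) second fundamental forms of $M$ to the corresponding faces of $N$. With the hypersurface types and dihedral angles of $N$ matched to those of $M$, the parabolic prism structure of $N$ then follows from the combinatorial equivalence with $M$.
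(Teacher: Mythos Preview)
Your proposal is correct and follows essentially the same route as the paper: verify that a parabolic prism is radially convex with $\chi(M)\neq 0$, invoke Remark~\ref{remark:top+geodesic} to drop the distance-non-increasing hypothesis on $\partial N$, and then apply Theorem~\ref{thm:hyperbolicComparison-intro} to obtain the equalities (i)--(iii) and the hyperbolicity of $N$.

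The only difference lies in the final identification of $N$ as a parabolic prism locally isometric to $M$. The paper handles the bottom face by re-running the argument of part~(iv) of Theorem~\ref{thm:hyperbolicComparison-intro} (rather than pulling it through $\vartheta$), uses part~(III) only to see that the side-face normals are constant, and then cites \cite[Theorem~2.9]{Wang2022:fl} to conclude that the flat top face of $N$ is locally isometric to that of $M$, from which the local isometry of the prisms follows. Your approach instead leans entirely on the ambient isometry $\vartheta$ from part~(III): since $d\vartheta_x(\overbar e_n(x))=e_n(f(x))$ forces $\vartheta(x)=f(x)$ on interiors of codimension-one faces, $\vartheta$ carries each face of $N$ into the corresponding face of $M$ and transports second fundamental forms, which immediately yields both the horosphere/geodesic face types and the local isometry without a separate appeal to \cite{Wang2022:fl}. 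This is a legitimate shortcut; just be aware that you are using part~(III) in the relaxed setting where hypothesis~(4) of Theorem~\ref{thm:hyperbolicComparison-intro} has been dropped, so strictly speaking you are invoking that its \emph{proof} still goes through (as the paper also does), not the theorem as stated.
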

\begin{proof}
	By definition,  $M$ being a parabolic prism in $\mathbb H^n$ implies that $M$ is radially convex in $\mathbb H^n$ in the sense of Definition \ref{def:radiallyConvex}. Let us again write the metric $g$ on $M$ in the following warped product metric: 
	\[  g = dr^2 + e^{2r} h \]
	where $h$ is the flat metric on $\mathbb R^{n-1}$. Note that,  for a parabolic  prism, the normal vector of each codimension one face is either parallel or orthogonal to $\partial_r$. Since
		$$A=\langle\nu,\nabla r\rangle$$
		on any face of a parabolic prism, we do \emph{not} need to assume $f$ to be distance non-increasing on the boundary in the current setup, as indicated in the proof of Lemma \ref{lemma:secondff>=}. Therefore the proof of Theorem \ref{thm:hyperbolicComparison-intro} directly applies to give equalities of scalar curvature, mean curvature, and dihedral angles. Furthermore, by  Theorem \ref{thm:hyperbolicComparison-intro}, $(N, \overbar g)$ is also hyperbolic such that its top face is a horosphere. In fact, the same argument in the proof for part $(iv)$ of  Theorem \ref{thm:hyperbolicComparison-intro} also shows that the bottom face is a horosphere as well. Moreover, the normal vectors of each side face of $N$ is constant and parallel to the top and bottom due to part (III) of Theorem \ref{thm:hyperbolicComparison-intro}. Therefore $N$ is a parabolic  prism in $\mathbb H^n$. By \cite[Theorem 2.9]{Wang2022:fl}, the top face of $N$ is a convex polyhedron that is locally isometric to the top face of $M$. Since $N$ and $M$ are parabolic prisms in $\mathbb H^n$, it follows that $N$ is locally isometric to $M$. This finishes the proof. 
\end{proof}

As a consequence of Theorem \ref{thm:hyperbolicPrismComparison}, we have the following theorem, which is an analogue of Stoker's conjecture \cite{MR222765} for parabolic prisms in hyperbolic spaces. 

\begin{theorem}\label{thm:stokeranalogue}
	Suppose $M_1$ and $M_2$ are two parabolic prisms in $\mathbb H^n$ of the same combinatorial type such that all corresponding dihedral angles are equal, then all corresponding angles of $M_1$ and $M_2$ are equal. 
\end{theorem}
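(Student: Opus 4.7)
The plan is to apply Theorem \ref{thm:hyperbolicPrismComparison} to a polytope map $f\colon M_1 \to M_2$ of nonzero degree that preserves the combinatorial face structure, deduce that $M_1$ is locally isometric to $M_2$, and exploit the fact that a local isometry preserves all pairwise angles between tangent vectors. The assertion about ``all corresponding angles,'' including angles between pairs of non-adjacent codimension one faces meeting at a vertex, is a purely local statement about the unit inner normals at a point, so local isometry at that point immediately yields what is asserted.

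First, I would construct the map. Write $M_i = P_i \times I_i$ with $P_i$ a convex polyhedron in $\mathbb R^{n-1}$ and $I_i$ an interval. The hypothesis of having the same combinatorial type gives a face-preserving bijection between the face lattices of $P_1$ and $P_2$. I would pass to a common polyhedral refinement of $P_1$ and $P_2$ (using a simplicial subdivision compatible with both boundary structures) and extend the vertex correspondence by affine interpolation on each cell to obtain a piecewise linear homeomorphism $\varphi\colon P_1\to P_2$ carrying each face of $P_1$ to the corresponding face of $P_2$. Pairing $\varphi$ with an orientation-preserving affine isomorphism $I_1\to I_2$ yields $f\colon M_1\to M_2$. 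This map is a polytope map in the sense of Definition \ref{def:polytopeMap} (smooth on each cell of the refinement, hence smooth away from the codimension three stratum), carries codimension $k$ faces to codimension $k$ faces, and has degree $\pm 1$.

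Next, I would verify the hypotheses of Theorem \ref{thm:hyperbolicPrismComparison}. Since both $M_1$ and $M_2$ inherit the hyperbolic metric from $\mathbb H^n$, we have $\Sc(\overbar g) = -n(n-1) = \Sc(g)\circ f$ everywhere. The mean curvatures of faces of a parabolic prism are determined by the face type: $n-1$ on the top horosphere, $-(n-1)$ on the bottom horosphere, and $0$ on each totally geodesic side face. Because $f$ respects the face structure, $H_{\overbar g}(\overbar F_i) = H_g(F_i)\circ f$ on each codimension one face. The dihedral angle inequality $\theta_{ij}(\overbar g) \leq \theta_{ij}(g)\circ f$ holds with equality by hypothesis. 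Both $M_1$ and $M_2$ embed in $\mathbb H^n$, hence are spin, and so $f$ is automatically a spin map. Theorem \ref{thm:hyperbolicPrismComparison} therefore applies, and $M_1$ is locally isometric to $M_2$.

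Finally, a local isometry carries tangent vectors to tangent vectors preserving inner products. At any vertex $v$ of $M_1$ and the corresponding vertex $f(v)$ of $M_2$, the unit inner normals of the codimension one faces of $M_1$ meeting at $v$ are sent bijectively to the unit inner normals of the corresponding codimension one faces of $M_2$ meeting at $f(v)$, with all pairwise inner products preserved. This yields equality of all corresponding angles between inner normals, including those of non-adjacent faces, proving the theorem. The only mild obstacle is the PL construction of $f$ with enough regularity to satisfy Definition \ref{def:polytopeMap}; this is routine for convex polyhedra via a common simplicial refinement, and the substantive content of the argument lies entirely in the rigidity statement of Theorem \ref{thm:hyperbolicPrismComparison}.
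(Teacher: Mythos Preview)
Your proposal is correct and follows the route the paper intends: the paper presents Theorem \ref{thm:stokeranalogue} as an immediate consequence of Theorem \ref{thm:hyperbolicPrismComparison} and gives no separate proof, so constructing a face-preserving degree-one polytope map $f\colon M_1\to M_2$ and invoking Theorem \ref{thm:hyperbolicPrismComparison} is exactly what is expected. One small remark: rather than relying on the local isometry in your final paragraph (where one must check that the isometry is compatible with the combinatorial correspondence given by $f$), it is cleaner to appeal directly to part $(v)$ of Theorem \ref{thm:hyperbolicComparison-intro}, whose proof carries over verbatim in the prism setting and already yields equality of the inner products $\langle \overbar\nu_i,\overbar\nu_j\rangle = \langle \nu_i,\nu_j\rangle$ for the normals of the \emph{$f$-corresponding} faces at any common point.
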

 We emphasize that the above theorem asserts that all corresponding angles (including the face angles\footnote{Face angles of any given face are dihedral angles of that face, when the face is viewed as a polyhedron itself. }  and the angles between \emph{non-adjacent} codimension one faces) of $M_1$ and $M_2$ coincide.

\section{Dihedral rigidity of direct product spaces}\label{sec:directprod}
In this section, we will prove Theorem \ref{thm:rigidityOfProduct-intro} for certain direct product spaces. We shall mainly focus on the case of direct products between flat spaces and  positively curved spaces. A simplest such example is the direct product $X\times I$ of a positively curved space $X$ and the unit interval $I$. This of course can also be viewed as a special case of warped product spaces, where the metric $g$ on $X\times I$ in this case is simply 
\[ g = dr^2 + h \]
for some fixed metric $h$ on $X$. In other words, the warping function $\varphi$ is the constant function  $\varphi \equiv 1$ in this case. 

Now let us prove Theorem \ref{thm:rigidityOfProduct-intro}. 

\begin{proof}[Proof of Theorem \ref{thm:rigidityOfProduct-intro}]
	By considering the direct product spaces $N\times I$ and $M\times I$ where $I$ is the unit interval, we can reduced the odd dimensional case to the even dimensional case. Hence without loss of generality, let us assume $n= \dim M$ is even. 

	It is an immediate consequence of  \cite[Theorem 1.7]{Wang:2021tq} that  the inequalities in (1), (2) and (3) have to be equalities, and moreover there exists a non-zero parallel section $\sigma$ of $S_N\otimes f^\ast S_M$ satisfying the boundary condition $B$. 
	
	We first prove that $TN$ admits an orthogonal decomposition that is  compatible with the decomposition of $TM = TX\oplus TY$. More precisely, we have the following. 
	\begin{claim*}
		There exists an orthogonal decomposition 
		\begin{equation}\label{eq:TN=U+V}
			TN=V_1\oplus V_2
		\end{equation} such that $f$ maps $V_1$ isometrically to $TX\subset TM$, and maps $V_2$ to $(TX)^\perp$.
	\end{claim*}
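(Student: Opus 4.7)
The starting point is the nonzero parallel section $\sigma \in H^1(N, S_N \otimes f^*S_M; B)$ whose existence follows from the cited \cite[Theorem 1.7]{Wang:2021tq}. By the same argument used in the proof of Theorem \ref{thm:rigidityOfWarpedProductintro}, $\sigma$ is nowhere vanishing on $N$. Since the coordinate vector fields $\partial_{y_1}, \ldots, \partial_{y_k}$ on $M$ are parallel (because the metric on $Y \subset \mathbb{R}^k$ is flat), the Clifford multiplications $c(\partial_{y_j})$ commute with $\nabla^{S_M}$; consequently each $c(\partial_{y_j})\sigma$ is again a parallel section of $S_N \otimes f^*S_M$, and so are the iterated products $c(\partial_{y_{j_1}})\cdots c(\partial_{y_{j_r}})\sigma$. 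This enlarged family of parallel sections will be the main tool for extracting the product structure on $TN$.

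The plan is to define the candidate splitting pointwise by setting
\[
V_2(p) := (f_*)^{-1}\bigl(T_{f(p)}Y\bigr), \qquad V_1(p) := V_2(p)^\perp,
\]
where the orthogonal complement is taken with respect to $\overbar g$. With this choice, $f_*(V_2) \subseteq TY = (TX)^\perp$ is automatic. Two things remain to check: first, that this is a smooth pointwise decomposition with $\dim V_1 = n-k$ and $\dim V_2 = k$; second, that $f_*$ restricts to a pointwise isometry $V_1 \to TX$. For the dimension count, the nonvanishing of $\deg(f)$ yields surjectivity of $f_*$ on a dense open set; the equality case of Lemma \ref{lemma:curvature>=}, together with the parallelism of $\sigma$, extends this surjectivity to all of $N$. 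For the isometry claim, the curvature operator of $M = X \times Y$ is supported entirely in the $TX \wedge TX$ directions because $Y$ is flat, so equality in Lemma \ref{lemma:curvature>=} forces $f_*$ to preserve the length of $TX$-components; the hypothesis $\mathrm{Ric}(g_X) > 0$ then prevents any collapse and pins down $f_*|_{V_1}$ as a pointwise isometry onto $TX$.

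The main subtlety is reconciling the two orthogonal-complement operations: $V_1 = V_2^\perp$ is defined using $\overbar g$ on $N$, whereas $TX = (TY)^\perp$ is defined using $g$ on $M$. To conclude $f_*(V_1) \subseteq TX$, I would take $\overbar\xi \in V_1$ and $\overbar\eta \in V_2$ and upgrade the distance non-increasing inequality $|f_*\overbar\xi + t f_*\overbar\eta|_g^2 \leq |\overbar\xi + t\overbar\eta|_{\overbar g}^2$ to an equality for all $t$, using the rigidity forced by $\sigma$ being parallel; this forces the mixed pairing $\langle f_*\overbar\xi, f_*\overbar\eta\rangle_g$ to equal $\langle \overbar\xi, \overbar\eta\rangle_{\overbar g} = 0$, and combined with surjectivity of $f_*|_{V_2}$ onto $TY$ this yields $f_*(V_1) \perp_g TY$. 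The hardest step will be extracting this mixed-metric equality cleanly from the equality case of Lemma \ref{lemma:curvature>=}: the argument will require decomposing the $2$-form orthonormal frame $\{w_i\}$ used in that lemma into its $TX \wedge TX$, $TY \wedge TY$, and $TX \wedge TY$ parts, and tracking how each block contributes to (and then saturates) the curvature inequality in the presence of the flat $Y$-directions.
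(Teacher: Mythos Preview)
Your definitions of $V_1$ and $V_2$ agree with the paper's (the paper sets $V_1 = \ker(P_X f_*)^\perp$ where $P_X\colon TM \to TX$ is the orthogonal projection, and $\ker(P_X f_*) = (f_*)^{-1}(TY)$ is exactly your $V_2$), and your plan to invoke the equality case of the curvature estimate together with $\mathrm{Ric}(g_X) > 0$ is also the paper's plan. The gap is in your handling of what you call the ``main subtlety.'' You propose to upgrade the inequality $|f_*(\overbar\xi + t\overbar\eta)|_g^2 \leq |\overbar\xi + t\overbar\eta|_{\overbar g}^2$ to an equality for all $t$ via ``the rigidity forced by $\sigma$ being parallel.'' But the rigidity does \emph{not} make $f_*$ an isometry on $V_2$: the eventual conclusion of Theorem~\ref{thm:rigidityOfProduct-intro} is only that $\overbar Y$ is a convex flat domain whose boundary normals agree with those of $Y$ up to an orthogonal transformation, not that $f|_{\overbar Y}$ is an isometry. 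So for generic $\overbar\eta \in V_2$ one has strict inequality $|f_*\overbar\eta|_g < |\overbar\eta|_{\overbar g}$, and the equality you need is simply false.

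The paper's resolution is shorter and sidesteps this entirely. One diagonalizes $P_X f_*$ as $P_X f_*(\overbar e_j) = \mu_j e_j$ with $0 \leq \mu_j \leq 1$ (the upper bound holds because $|P_X f_* v| \leq |f_* v| \leq |v|$). Since the curvature operator of $M = X\times Y$ is supported on $\Bigwedge^2 TX$, the equality case of the curvature estimate reads $\sum (1-\mu_i\mu_j)R^X_{ijij} = 0$; every summand is nonnegative, and $\mathrm{Ric}(g_X) > 0$ then forces all $\mu_j = 1$ on $V_1$. Hence $|P_X f_*\overbar\xi|_g = |\overbar\xi|_{\overbar g}$ for every $\overbar\xi \in V_1$, and the Pythagorean identity $|f_*\overbar\xi|_g^2 = |P_X f_*\overbar\xi|_g^2 + |P_Y f_*\overbar\xi|_g^2$ combined with the distance-non-increasing bound $|f_*\overbar\xi|_g \leq |\overbar\xi|_{\overbar g}$ immediately gives $P_Y f_*\overbar\xi = 0$, i.e.\ $f_*\overbar\xi \in TX$. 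No mixed-pairing argument, no surjectivity of $f_*$, and no block decomposition of the $2$-form frame is needed. The parallel sections $c(\partial_{y_j})\sigma$ you construct in your first paragraph also play no role in this claim; they appear only later in the paper, when building the parallel vector fields $\overbar v_i$ on $N$.
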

	Let $p_X$ be the projection from $M$ to $X$, and $P_X$ be the orthogonal projection from $TM$ to $TX$. For each $x\in N$, consider the map $P_Xf_*\colon T_xN\to T_{f(x)}M$. By diagonalizing  $P_Xf_*$ with respect to the inner products on $T_xN$ and $T_{f(x)}M$, there exist $\mu_j\geq 0$ and  orthonormal bases $\{\overbar e_j\}_{1\leq j\leq n}$ for $T_x N$ and $\{e_j\}_{1\leq j\leq n}$ for $T_{f(x)}M$ such that $P_Xf_*(\overbar e_j)=\mu_j e_j$. 
	
	Let $S$ be the subset of $\{1,\ldots ,n\}$ such that $\mu_j\ne 0$. Note that only the $TX$-components of the curvature form of $M$ is non-zero. Therefore, by \cite[Lemma 2.1]{Wang:2021tq} or similarly the proof of Lemma \ref{lemma:curvature>=},  the equality of scalar curvature $\Sc(\overbar{g})_x = \Sc(g)_{f(x)}$  implies that
	$$\sum_{i,j\in S}(1-\mu_i\mu_j) R_{ijij}^X=0$$
	where $R_{ijij}^X$ the curvature form of $g_X$. It follows from the positivity of the Ricci curvature of $g_X$ that $\mu_j=1$ for $j\in S$. Since $f$ is distance non-increasing, the tangent map $f_\ast$ maps  $\ker(P_Xf_*)^\perp$ isometrically  to $TX$. Define $V_1$ to be  $\ker(P_Xf_*)^\perp$. Then  its orthogonal complement $V_2 = V_1^\perp$ is mapped to $(TX)^\perp$, which is the flat part of $TM$. This finishes the proof of the claim.

	Let $F^Y_i$ be a codimension one face of $Y$. Then $F_i \coloneqq X\times F^Y_i$ is a codimension one face of $X\times Y$. Suppose $\overbar F_i$ is a codimension one face of $N$ that maps to $F_i$.  Let $x$ be a point in the interior of $\overbar F_i$  such that $f(x)$ also lies in the interior of $F_i$. 
	Let $\overbar \nu_x$ (resp. $\nu_x$) be the unit inner normal vector at $x$ (resp. $f(x)$). Recall that there exist a nonzero parallel section $\sigma$ of $S_N\otimes f^\ast S_M$ over $N$ such that $\sigma$ satisfies the boundary condition
	$$\mathscr E(\overbar c(\overbar \nu_x)\otimes c(\nu_x))\sigma=-\sigma,$$
	where $\mathscr E = \overbar \epsilon \otimes \epsilon$ with $\overbar \epsilon$ and $\epsilon$ the grading operator on $S_N$ and $f^\ast S_M$. Equivalently, we have $\overbar c(\overbar \nu_x)\sigma=-\mathscr E c(\nu_x)\sigma$ at $x$.

	Since $\nu_{x}$ lies in the flat part $TY$ of $TM$, it extends to a parallel vector field on $M$, which will be denoted by $\nu$. Observe that  $-\mathscr Ec(\nu)\sigma$ on $N$, in particular, is parallel along any piecewise smooth path in $N$.  We shall show that $\overbar \nu_x$ also extends to a parallel vector field on $N$. Let $\gamma_t,~t\in[a,b]$,  be any piecewise smooth loop in $N$ such that $\gamma_a=\gamma_b=x$. Let $\overbar \nu_{\gamma_t}$ be the parallel transport of $\overbar \nu_x$ from $\gamma_a$ to $\gamma_b$ along $\gamma$. Therefore $\overbar c(\overbar \nu_{\gamma_t})\sigma(\gamma_t)$ is parallel along $\gamma$. Observe that  $\overbar c(\overbar \nu_{\gamma_a})\sigma(x)=-\mathscr E c(\nu_x)\sigma(x)$ at $\gamma_a=x$. Since $-\mathscr Ec(\nu)\sigma$ is also parallel along $\gamma$, it follows from the uniqueness theorem of first order differential equations that 
	$$\overbar c(\overbar \nu_{\gamma_b})\sigma(\gamma_b)= -\mathscr E c(\nu_{\gamma_b})\sigma(\gamma_b) =  -\mathscr E c(\nu_{x})\sigma(x) =  \overbar c(\overbar \nu_{\gamma_a})\sigma(x),$$
	hence $\overbar \nu_{\gamma_b}=\overbar \nu_{x}$. It follows that the parallel transport of $\overbar \nu_x$ from $x$ to any point $x'\in N$ is independent of the choice of path. In particular,  there is a unique way to extend $\overbar \nu_x$ to parallel vector field over $N$, which will be denoted by $\overbar \nu$. 
	
	If $F^Y_{i_1}, \cdots, F^Y_{i_\ell}$ are $\ell$ codimension one faces of $Y$ and $x$ lies in the interior of the intersection $\bigcap (X\times F^Y_{i_j})$, then a similar argument also generates $\ell$ linearly independent parallel vector fields along $N$ (cf. the proof of  \cite[Theorem 2.9]{Wang2022:fl}). To summarize, since $TY$ admits $k$ linearly independent orthonormal parallel sections $v_1, \cdots v_k$, the above process produces $k$ parallel sections $\overbar v_1, \cdots, \overbar v_k$ of $TN$ such that 
	$$\overbar c(\overbar v_i)\sigma=-\mathscr E c(v_i)\sigma$$
	everywhere on $N$. Since $\sigma$ is nowhere vanishing, it follows that 
	\[ |\overbar v_i|_{\overbar g} = |v_i|_g \]
	everywhere on $N$ for all $i$. By the polarization identity, we have 
	$$\langle \overbar v_i,\overbar v_j\rangle=\langle v_i, v_j\rangle$$
	everywhere on $N$. In particular, $\{\overbar v_1, \cdots, \overbar v_k\}$ is a set of $k$ linearly independent orthonormal parallel section of $TN$,  which naturally defines $k$ coordinate functions $\rho_1,\ldots,\rho_k$ on $N$ (with respect to some given point in $N$, where this given point is viewed as the origin of $\mathbb R^k$). 
	
 Since $\sigma$ is parallel, the curvature form of $\nabla$ acts as the zero operator on  $\sigma$, that is, 
\[ (R^{S_N}_{\overbar e_a, \overbar e_b}  + R^{S_M}_{f_*\overbar e_a,f_*\overbar e_b}) \sigma = 0 \]
for all $\overbar e_a, \overbar e_b\in N$,  where 
\[  R^{S_N}_{\overbar e_a, \overbar e_b} \coloneqq  \frac{1}{2} \sum_{i<j} \langle R^N_{\overbar e_a, \overbar e_b} \overbar e_i, \overbar e_j\rangle \overbar c(\overbar e_i) \overbar c(\overbar e_j)  \]
and  
\[  R^{S_M}_{f_\ast \overbar e_a, f_\ast \overbar e_b}\coloneqq  \frac{1}{2} \sum_{i<j} \langle R^M_{f_\ast \overbar e_a, f_\ast \overbar e_b}  e_i, e_j\rangle c(e_i) \overbar c(e_j)  \]
with $\{ \overbar e_i\}$ (resp. $\{e_i\}$)  an orthonormal basis of $TN$ (resp. $TM$).  Since $\overbar v_1, \cdots, \overbar v_k$ are parallel, the curvature form $R^N_{\overbar e_a, \overbar e_b}$ of $TN$ vanishes if either $\overbar e_a$ or $\overbar e_b$ lies in the linear span of   $\overbar v_1, \cdots, \overbar v_k$. It follows that  for each $\overbar v_i$ from above, we have 
	$$ R^{S_M}_{f_*\overbar v_i,f_*\overbar w} (\sigma)=0,$$
	 for any $\overbar w \in TN$. From line \eqref{eq:TN=U+V}, the subbundle $TX$ of $TM$ is contained in the image $f_*(TN)$. Together with the fact the orthogonal complement of $TX$ in $TM$ is flat,  this implies
	 \[   R^{S_M}_{f_*\overbar v_i, w}(\sigma) = 0 \] for all $w\in TM$. It follows from \cite[Corollary 2.8]{spinorialapproach} that the Ricci curvature operator of $TM$ vanishes at $f_*\overbar v_i$, hence $f_*\overbar v_i$ lies in the flat part $TY$ of $TM$. Thus the tangent bundle of submanifold $\rho^{-1}(y)$ is  equal to $V_1$ from line \eqref{eq:TN=U+V}, where $\rho = (\rho_1, \cdots, \rho_k)\colon N \to \mathbb R^k$ is the coordinate function determined by the parallel vector fields $\overbar v_1, \cdots, \overbar v_k$ from above and $y\in \mathbb R^k$ is some fixed element of $\mathbb R^k$.  In particular, the map $f_\ast$ maps the tangent bundle of $\rho^{-1}(y)$  isometrically to $TX$. This implies that $f\colon \rho^{-1}(y)\to X$ is a local isometry of manifolds with polyhedral boundary. It follows that $f\colon \rho^{-1}(y)\to X$ is a Riemannian covering map, since $\rho^{-1}(y)$ is compact and $X$ is connected. Therefore $N$ is isometric to a direct product of manifolds with polyhedral boundary $(\overbar X\times \overbar Y, g_{\overbar X}\times g_{\overbar Y})$ such that $\overbar Y$ is a flat manifold with polyhedral boundary and 
	 \[ f = (f_X, f_Y) \colon \overbar X \times \overbar Y \to X\times Y \]
with  $f_X\colon (\overbar  X, g_{\overbar X}) \to (X, g)$  a local isometry. 

By assumption,  we have  	 $H_{\overbar{g}}(\overbar{F}_i)_y \geq  H_g(F_i)_{f(y)}$ for all $y$ in each codimension one face $\overbar{F}_i$ of $N$, and  
$\theta_{ij}(\overbar{g})_z\leq \theta_{ij}(g)_{f(z)}$ for all $\overbar F_i, \overbar F_j$ and all $z \in \overbar F_i\cap \overbar{F}_j$. Hence the same comparison inequalities on mean curvature and dihedral angles  hold between $\overbar Y$ and $Y$. It follows from \cite[Theorem 2.9]{Wang2022:fl} that  $\overbar Y$ 
  	also a convex subspace in $\R^k$ and  there exists an orthogonal transformation $\Theta$ of $\mathbb R^k$ such that, for all $y$ in the interior of codimension one faces of $\overbar Y$, $\Theta$ maps the unit inner normal vector at  $y\in \partial \overbar Y$ to the unit inner normal vector at $f_Y(y)$, provided $f_Y(y)$ also lies in the interior of codimension one faces of $Y$.
 This finishes the proof. 
\end{proof}

\end{document}